\documentclass[11pt, twoside]{article}
\usepackage{amssymb, amsmath, amsthm}
\usepackage[left=20mm, right=20mm, bottom=25mm, top=25mm]{geometry}
\usepackage{inputenc}
\usepackage{fancyhdr}
\usepackage{tikz}
\usetikzlibrary{positioning}
\usepackage{titling}
\usepackage{hyperref}
\predate{}
\postdate{}
\usepackage{lipsum}
\usepackage{titling}
\usepackage{xcolor}
\usepackage{placeins}
\usepackage{tikz}
\usepackage{verbatim}
\usepackage{titling}
\usepackage[T1]{fontenc}
\usepackage{currvita}
\usepackage{bbm}
\usepackage{enumitem}
\usepackage{cleveref}
\newtheorem{theorem}{Theorem}
\newtheorem{lemma}[theorem]{Lemma}
\newtheorem{proposition}[theorem]{Proposition}

\newtheorem{conjecture}[theorem]{Conjecture}
\newtheorem{corollary}[theorem]{Corollary}

\newtheorem{claim888}{Claim}

\newtheorem{case1}{Case}
\newtheorem{case2}{Case}

\theoremstyle{definition}

\theoremstyle{remark}

\newcommand{\cB}{\mathcal{B}}
\newcommand{\cA}{\mathcal{A}}

\newcommand{\cP}{\mathcal{P}}

\newcommand{\B}{\mathcal{B}}

\newcommand{\G}{\mathcal{G}}

\begin{document}
\newcommand{\Addresses}{
\bigskip
\footnotesize

\medskip

\noindent Maria-Romina~Ivan, \textsc{Department of Pure Mathematics and Mathematical Statistics, Centre for Mathematical Sciences, Wilberforce Road, Cambridge, CB3 0WB, UK.}\par\noindent\nopagebreak\textit{Email addresses: }\texttt{mri25@dpmms.cam.ac.uk}

\medskip

\noindent Sean~Jaffe, \textsc{Department of Pure Mathematics and Mathematical Statistics, Centre for Mathematical Sciences, Wilberforce Road, Cambridge, CB3 0WB, UK.}\par\noindent\nopagebreak\textit{Email address: }\texttt{scj47@cam.ac.uk}}

\pagestyle{fancy}
\fancyhf{}
\fancyhead [LE, RO] {\thepage}
\fancyhead [CE] {MARIA-ROMINA IVAN AND SEAN JAFFE}
\fancyhead [CO] {THE EXACT SATURATION NUMBER FOR THE DIAMOND}
\renewcommand{\headrulewidth}{0pt}
\renewcommand{\l}{\rule{6em}{1pt}\ }
\title{\Large{\textbf{THE EXACT SATURATION NUMBER FOR THE DIAMOND}}}
\author{MARIA-ROMINA IVAN AND SEAN JAFFE}
\date{ }
\maketitle
\begin{abstract}
What is the smallest size of a family of subsets of $[n]$ such that it does not contain an induced copy of $Q_2$ as a poset (known as the \textit{diamond}), but adding a new set creates such a copy? It is easy to see that a maximal chain has this property, and thus the answer is at most $n+1$. Despite the simplicity of the diamond structure, the lower bound stagnated at $\sqrt n$ for quite some time, until recently the authors obtained a linear lower bound. In this paper, we fully solve this question showing that such a family must have size at least $n+1$.
\end{abstract}

\section{Introduction}
We say that a poset $(\mathcal Q, \leq')$ contains an \textit{induced copy} of a poset $(\mathcal P, \leq)$ if there exists an injective function $f:\mathcal P\rightarrow\mathcal Q$ such that $(\mathcal P, \leq)$ and $(f(\mathcal P), \leq')$ are isomorphic. The poset $(\mathcal Q, \leq')$ is thought as the host environment. In what follows, all posets are finite. The canonical environment for finite posets is the power of $[n]=\{1,2,\dots,n\}$ equipped with the partial order given by set inclusion.

Let $\mathcal P$ be a fixed poset. We say that a family $\mathcal F$ of subsets of $[n]=\{1,2,\dots,n\}$ is $\mathcal P$-\textit{saturated} if $\mathcal F$ does not contain an induced copy of $\mathcal P$, but for every subset $S$ of $[n]$ such that $S\notin\mathcal F$, the family $\mathcal F\cup \{S\}$ contains such a copy. We denote by $\text{sat}^*(n, \mathcal P)$ the size of the smallest $\mathcal P$-saturated family of subsets of $[n]$. In general, we refer to $\text{sat}^*(n, \mathcal P)$ as the \textit{induced saturation number} of $\mathcal P$.

This notion, inspired by saturation for graphs, was introduced by Ferrara, Kay, Kramer, Martin, Reiniger, Smith and Sullivan \cite{ferrara2017saturation}. Despite their similarity, graph saturation and poset saturation are vastly different, partially due to the rigidity of \textit{induced} copies of posets. The dominant conjecture in the area is the following:
\begin{conjecture}[\cite{keszegh2021induced}] Let $\mathcal P$ be a finite poset. Then, either $\text{sat}^*(n,\mathcal P)=O(1)$, or $\text{sat}^*(n,\mathcal P)=\Theta(n)$.    
\end{conjecture}
The saturation number for posets has already been shown to display a dichotomy. Keszegh, Lemons, Martin, P\'alv\"olgy and Patk\'os \cite{keszegh2021induced} showed that the saturation number is bounded or at least $\log_2(n)$. This was later improved by Freschi, Piga, Sharifzadeh and Treglown \cite{freschi2023induced} who showed that $\text{sat}^*(n,\mathcal P)$ is bounded or at least $2\sqrt n$. In the other direction, Bastide, Groenland, Ivan and Johnston \cite{polynomial} showed that the saturation number of any poset grows at most polynomially. For the general case, this is the state of the art: polynomial upper bound and, if not bounded, $\sqrt n$ lower bound.

It is worth mentioning that, from a structural viewpoint, not only do we not know what features make a poset have unbounded saturation number, but currently we do not even have a good guess as to what those might be. A step in this direction has been recently taken by Ivan and Jaffe who showed that for any poset, one can add at most 3 `special' points to obtain a poset with saturation number at most linear \cite{gluing}. They also showed that the linear sum of two posets has unbounded saturation number if at least one of the posets possesses certain structural properties (which guarantee it has unbounded saturation number) -- this is the first, and so far the only, `new from old' operation which appears to be amenable to analysing saturation numbers. 

What about specific posets? It turns out that even then, determining the rate of growth of the saturation number is far from easy, and the analysis is highly dependent on the precise structure of the poset in question. In fact, linearity has only been established for a few posets, most notably the butterfly \cite{ivan2020saturationbutterflyposet, keszegh2021induced}, the antichain \cite{bastide2024exact}, the complete multipartite poset that is not a chain \cite{gluing, sums}, the $\mathcal N$ poset \cite{N}, and the diamond \cite{diamondlinear}. Out of all of them, the diamond, denoted by $\mathcal D_2$, (2-dimensional Boolean algebra, Hasse diagram depicted below) was the most resistant to analysis.
\begin{figure}[hbt!]
\hspace{0.1cm}\\
\centering
\begin{tikzpicture}
\node (top) at (4,1) {$\bullet$}; 
\node (left) at (3,0) {$\bullet$};
\node (right) at (5,0) {$\bullet$};
\node (bottom) at (4,-1) {$\bullet$};
\draw (top) -- (left) -- (bottom) -- (right) -- (top);
\node at (4,-2) 
{Hasse diagram of the diamond poset ($\mathcal D_2$).};
\end{tikzpicture}

\end{figure}
\FloatBarrier
Whilst it is easy to see that a maximal chain is diamond-saturated, and hence $\text{sat}^*(n,\mathcal D_2)\leq n+1$, the lower bound has stagnated at $\sqrt n$ for years. Recently, the authors showed that $\text{sat}^*(n,\mathcal D_2)\geq\frac{n+1}{5}$, which established the conjectured linearity. But what is the \textit{exact} value of $\text{sat}^*(n,\mathcal D_2)$? In this paper we fully settle this question.
\begin{theorem}
    \label{maintheorem}
Let $n\in\mathbb N$. Then $\text{sat}^*(n,\mathcal D_2)=n+1$.
\end{theorem}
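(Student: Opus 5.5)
The upper bound is already in hand: a maximal chain $\emptyset\subsetneq\{1\}\subsetneq\dots\subsetneq[n]$ is diamond-saturated, so $\text{sat}^*(n,\mathcal D_2)\le n+1$. The work is the lower bound. Let $\mathcal F$ be diamond-saturated. I would first reduce to the case $\emptyset,[n]\in\mathcal F$. If $\emptyset\notin\mathcal F$, adding $\emptyset$ must create a diamond, and $\emptyset$ can only play the role of the bottom. This forces two incomparable sets $A,B\in\mathcal F$ with some $C\in\mathcal F$ above both, and one can check that in fact $\emptyset$ lies in every diamond-saturated family; the same holds for $[n]$ by symmetry. The plan is then to associate to each element $i\in[n]$ a distinct set of $\mathcal F\setminus\{\emptyset\}$, i.e. to build an injection $\phi:[n]\to\mathcal F\setminus\{\emptyset\}$, or, more flexibly, to count via a weighting.

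\textbf{Step 1 (structure from saturation).} For each $i$, consider the singleton $\{i\}$, or more generally sets just above or below members of $\mathcal F$. Adding a missing set $S$ creates a diamond in which $S$ is the top, the bottom, or a middle element. I would classify the sets $S\notin\mathcal F$ by this role. In particular, for $x\notin F$ with $F\in\mathcal F$, the set $F\cup\{x\}$ (if not in $\mathcal F$) yields a witness. The key local object I would use is this: for $F\in\mathcal F$, let $U(F)$ be the set of elements $x$ such that $F\cup\{x\}$ is covered, in the sense that $F\cup\{x\}$ is in $\mathcal F$ or is "blocked" as a bottom or middle. The aim is to show that the minimal sets of $\mathcal F$ above $F$ together use up the new elements efficiently.

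\textbf{Step 2 (charging scheme).} I would process $\mathcal F$ along a linear extension by size. To each $F\in\mathcal F\setminus\{\emptyset\}$ I assign the charge $|F\setminus\bigcup\{G\in\mathcal F: G\subsetneq F\}|$, the number of elements that first appear at $F$. Trivially the total charge is $n$ (as $[n]\in\mathcal F$), so it suffices to show that each $F$ has charge at most $1$, or to redistribute charges so that this holds. Suppose $F$ introduces two new elements $x,y$. Then consider $S=G\cup\{x\}$ for a maximal $G\subsetneq F$ in $\mathcal F$, and use saturation. $S$ cannot be a bottom or top of a diamond without contradicting the minimality of $F$ or the absence of diamonds in $\mathcal F$. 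So $S$ must be a middle element, with a bottom $B\subseteq G$ and a top $T\supseteq S$ and another middle $M$ incomparable to $S$. I would exploit the fact that $\mathcal F$ itself is diamond-free to derive that $M$ or $T$ introduces $x$ earlier, which is a contradiction. Where this fails (because the newly introduced elements can genuinely cluster, for example in an antichain-like portion of $\mathcal F$), the charge has to be redistributed to incomparable partners: each extra element is paid for by a distinct set $M$ found above.

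\textbf{Main obstacle.} The hard part is making the redistribution injective. Diamond-free families can contain large antichains and "wide" layers, and a single set $M$ might be nominated by many $(F,x)$ pairs. The previous linear bound $\frac{n+1}{5}$ presumably lost a constant exactly here. I would try to fix this by choosing the witness canonically, e.g. taking $M$ minimal and, among those, the top $T$ minimal. I would then prove that two pairs nominating the same $M$ force a diamond inside $\mathcal F$ formed by $B$, the two $F$'s, and $T$, using that there is no induced $\mathcal D_2$. If a direct injection resists, I would fall back on induction on $n$: pick an element $x$, pass to the family $\{F\setminus\{x\}\}$ restricted suitably (or the link at a minimal nonempty set), show the result is saturated in $[n-1]$, and show that it has at least one fewer set.
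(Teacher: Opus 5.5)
\textbf{There is a genuine gap: the reduction step is false, and it discards the whole difficulty.}

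It is not true that $\emptyset$, or dually $[n]$, belongs to every diamond-saturated family. For $n\ge 2$, the family $\{[n]\}\cup\{[n]\setminus\{i\}:i\in[n]\}$ has height two, so it is diamond-free. Any missing $S$ has $|S|\le n-2$, so it completes the diamond $S\subsetneq[n]\setminus\{i\},\,[n]\setminus\{j\}\subsetneq[n]$ for distinct $i,j\notin S$. Yet this family omits $\emptyset$. Dually, $\{\emptyset\}\cup\{\{i\}:i\in[n]\}$ omits $[n]$. What you extract from $\emptyset\notin\mathcal F$ (two incomparable members with a common upper bound in $\mathcal F$) only restates that adding $\emptyset$ creates a diamond; it contradicts nothing.

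The case you reduce to is also trivial. If $\emptyset,[n]\in\mathcal F$, any two incomparable members form a diamond with $\emptyset$ and $[n]$, so $\mathcal F$ is a chain. A saturated chain must be maximal: inserting $F\cup\{x\}$ into a gap $F\subsetneq G$ with $x\in G\setminus F$ leaves a chain, so no diamond appears. Hence $|\mathcal F|=n+1$ at once. If only one of $\emptyset,[n]$ is present, the lemma of Ferrara et al.\ quoted in the setup already gives the bound. The entire content of the theorem is the case $\emptyset,[n]\notin\mathcal F$, which your proposal never treats. Indeed, the wide antichain-like layers you worry about in your last paragraph cannot occur under your own reduction.

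In that genuine case, Step 2 does not get started. A minimal member $F$ of $\mathcal F$ has no $G\subsetneq F$ in $\mathcal F$ from which to form $G\cup\{x\}$, so its charge is all of $|F|$. Charges can be huge: each $[n]\setminus\{i\}$ in the family above has charge $n-1$. So ``charge at most $1$'' is not a viable target. The redistribution, the canonical choice of witnesses, and the fallback induction (where saturation of the restricted family is unjustified) are all left unspecified, so there is no actual argument.

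The missing idea is a structural handle on the minimal and maximal layers. The paper separates the minimal sets $\mathcal A$, the maximal sets $\mathcal X$, and the middle elements $G(\mathcal B)$, $H(\mathcal Y)$ of diamonds created by outside sets. It uses that $\mathcal A\cup\mathcal B$ is $C_2$-saturated, and peels $\mathcal A$ into nested families $\mathcal A_j$ whose blocks $A_j$ partition $[n]\setminus W$. It then extracts one set of $\mathcal A$ per block and one set of $G(\mathcal B)$ per non-leading element of a block. This gives $|\mathcal A\cup G(\mathcal B)|\ge n+1-m_{\mathcal A}-|W|$, improved to $n+1-|W|$ when $W\neq\emptyset$. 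The paper proves the dual bound for $\mathcal X\cup H(\mathcal Y)$ and shows the two collections are disjoint whenever $|\mathcal F|\le n$. It concludes in one of two ways: when $W=\overline{W}=\emptyset$, from $\min_{X\in\mathcal X}|X|\ge m_{\mathcal A}$; when $W\neq\emptyset$, from the pairs $X_i\subsetneq X_i\cup\{i\}$ above a fixed minimal set, together with a forest count.
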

The proof is very different from the one that just established linearity, although there are some similarities, especially in the setup. At the heart of the proof is the idea of separating, in a diamond-saturated family, the minimal sets, the maximal sets, and the collection of sets that are the middle points of a diamond created when adding a set from the outside to the family. The last category of sets will be further split into two, one to be analysed with the minimal elements and the other with the maximal elements. The second crucial idea is to construct a sequence of nested families, where the starting one is the family of minimal (maximal) sets, that systematically isolates all the elements of the ground set that appear in at least one minimal (avoid at least one maximal) set.

The plan of the paper is as follows. In Section 2 we introduce all notations and establish some short but helpful associated lemmas. In Section 3 we upper bound the set of minimal elements together with a suitably chosen subset of the `middle' elements, as described above. In Section 4, combining the bounds and the structure analysed in previous sections, we prove our main result.

Throughout the paper our notation is standard. For a finite non-empty set $X$ and a positive integer $k\leq |X|$ we denote by $\binom{X}{k}$ the collection of subsets of $X$ of size $k$. Also, $C_2$ is the chain poset of size 2, i.e. two distinct comparable elements, $\mathcal V$ is the 3-point poset comprised of a minimal element and two other incomparable elements above it, and $\Lambda$ is the 3-point poset comprised of a maximal element and 2 other incomparable elements below it. They are depicted below.
\begin{figure}[hbt!]
\centering
\begin{tikzpicture}[scale=0.75]
\node at (-5,0) (C2below){$\bullet$};
\node at (-5,2) (C2above){$\bullet$};
\draw (C2below) -- (C2above);
\node at (-5,-0.5) {$C_2$};
\node at (0,0)(Vbottom) {$\bullet$};
\node at (1,2)(Vright) {$\bullet$};
\node at (-1,2)(Vleft) {$\bullet$};
\draw (Vright) -- (Vbottom) -- (Vleft);
\node at (0,-0.5) {$\mathcal{V}$};
\node at (5,2)(Ltop) {$\bullet$};
\node at (6,0) (Lright){$\bullet$};
\node at (4,0)(Lleft) {$\bullet$};
\draw (Lright) -- (Ltop) -- (Lleft);
\node at (5,-0.5) {$\Lambda$};
\end{tikzpicture}
\end{figure}
\section{Setup and preliminary lemmas}
In this section we establish the main framework and notations, as well as some short lemmas that will be repeatedly used in the sections to follow.

Let $\mathcal F$ be a diamond-saturated family with ground set $[n]$. We recall a lemma proved in \cite{ferrara2017saturation}.
\begin{lemma}[Lemma 5 in \cite{ferrara2017saturation}]
Let $\mathcal F\subseteq\mathcal P([n])$ be a diamond-saturated family. If $\emptyset\in\mathcal F$, or $[n]\in\mathcal F$, then $|\mathcal F|\geq n+1$.
\end{lemma}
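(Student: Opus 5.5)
We need to prove Lemma 5: if $\emptyset\in\mathcal F$ (the case $[n]\in\mathcal F$ follows by taking complements, since complementation maps the diamond to itself and preserves saturation), then $|\mathcal F|\ge n+1$.

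The plan is to look at the singletons. Assume $\emptyset\in\mathcal F$. Since $\emptyset$ lies below every set, any pair of incomparable sets $A,B\in\mathcal F\setminus\{\emptyset\}$ that have a common upper bound in $\mathcal F$ would already form a diamond with $\emptyset$ at the bottom. So $\mathcal F\setminus\{\emptyset\}$ contains no induced $\Lambda$. Now take a singleton $\{i\}\notin\mathcal F$ and add it. A new diamond must use $\{i\}$, and $\{i\}$ is either the bottom or a middle element; it cannot be the top, since the only set below it is $\emptyset$.

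\emph{Case 1: $\{i\}$ is a middle element.} Then the bottom is $\emptyset$, and there are $B\in\mathcal F$ incomparable to $\{i\}$ (so $i\notin B$, $B\ne\emptyset$) and a top $T\in\mathcal F$ with $T\supseteq B\cup\{i\}$, so $T\ni i$.

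\emph{Case 2: $\{i\}$ is the bottom.} Then there are incomparable sets $A,B\in\mathcal F$ containing $i$, and a top $T\supseteq A\cup B$. But then $\emptyset,A,B,T$ is already a diamond in $\mathcal F$, which is impossible. So Case 2 never occurs, and in both the singleton case and the added case every element $i$ witnesses a set of $\mathcal F$ containing it.

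The goal is then an injection from $[n]$ into $\mathcal F\setminus\{\emptyset\}$. The key structural fact is that $\mathcal F\setminus\{\emptyset\}$ is $\Lambda$-free, which (together with diamond-freeness) makes its Hasse structure a forest whose upward paths are chains: each set has at most one "parent" direction, so the sets containing any given set form a chain. I would assign to each $i$ the minimal set $M_i\in\mathcal F$ containing $i$. This set is unique: two incomparable minimal sets containing $i$ would, together with a set above both, form a $\Lambda$. Such a set above both exists whenever $\{i\}\notin\mathcal F$, because Case 1 gives a top $T\ni i$ containing both; one has to check this carefully, and this is the main obstacle. When $\{i\}\in\mathcal F$, we simply have $M_i=\{i\}$.

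It then remains to count. Let $S\in\mathcal F\setminus\{\emptyset\}$, with children $C_1,\dots,C_k$ (the maximal members of $\mathcal F$ strictly below $S$). The children are pairwise disjoint, since otherwise two of them would share an element $j$, and saturation of $\{j\}$ would produce a diamond with top $S$. The elements $i$ with $M_i=S$ are exactly those of $S\setminus\bigcup C_j$. I would show that this set has size at most $1$ using saturation: two elements $a,b$ there would make $\{a\}$ (or a set such as $C_1\cup\{a\}$) addable without creating a diamond. More cleanly, I would show that each $S$ has at most one child plus at most one private element. Combining gives $n\le|\mathcal F|-1$, that is, $|\mathcal F|\ge n+1$. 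The expected difficulty lies in the disjointness and uniqueness steps, where one must verify that the added sets genuinely fail to create a diamond.
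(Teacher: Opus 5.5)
The paper does not prove this lemma; it only cites Ferrara et al. So I am judging your argument on its own terms. It has a genuine gap, located exactly at the step you flagged.

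\textbf{What is right, and the orientation error.} Your preliminary observations are correct: $\mathcal F'=\mathcal F\setminus\{\emptyset\}$ contains no $\Lambda$, a singleton $\{i\}\notin\mathcal F$ can be neither the bottom nor the top of a new diamond, and every $i$ lies in some member of $\mathcal F$. But your structural picture is upside down. Having no $\Lambda$ means that the members of $\mathcal F$ \emph{below} any given set form a chain. The members above a set may branch; for example, $\{3\}\subset\{1,3\}$ and $\{3\}\subset\{2,3\}$ both occur in the family below. Two consequences follow.
\begin{itemize}
\item Each $S\in\mathcal F'$ has exactly one child $c(S)$, so your disjointness step is vacuous.
\item The minimal member containing $i$ need not be unique. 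Case 1 only yields some $T\supseteq B\cup\{i\}$. A common upper bound of two incomparable members containing $i$ can never exist, since it would form a diamond with $\emptyset$. For instance, $\{\emptyset,\{2\},\{3\},\{1,2\},\{1,3\}\}$ is diamond-saturated in $\mathcal P([3])$, and both $\{1,2\}$ and $\{1,3\}$ are minimal members containing $1$.
\end{itemize}

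\textbf{The counting step fails.} The decisive claim, that each $S$ has at most one private element $|S\setminus c(S)|\le 1$, is false. On $[4]$ take
$$\mathcal F=\{\emptyset,\{3\},\{4\},\{1,2\},\{1,3\},\{1,4\},\{2,3\},\{2,4\}\}.$$
It is diamond-free, because the members below any set form a chain. Every missing set other than $\{1\},\{2\}$ strictly contains two incomparable members, so it tops a diamond over $\emptyset$. The sets $\{1\}$ and $\{2\}$ sit in the diamonds $\emptyset,\{1\},\{3\},\{1,3\}$ and $\emptyset,\{2\},\{3\},\{2,3\}$. Yet $c(\{1,2\})=\emptyset$, so both $1$ and $2$ are private to $\{1,2\}$. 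Your proposed witness $\{a\}$ does create a diamond, only in a different branch. So no per-set bound can give the injection $i\mapsto M_i$.

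\textbf{A repair.} View $\mathcal F$ as a tree rooted at $\emptyset$ with parent map $c$. Suppose $|F\setminus c(F)|\ge 2$ and $a\in F\setminus c(F)$. Then $X=c(F)\cup\{a\}$ is not in $\mathcal F$, and it can be neither the bottom nor the top of a new diamond. So it is a middle element, lying strictly below some $T$, with other middle element $Y$.
\begin{itemize}
\item The chain below $T$ forces $c(F)\subsetneq Y$, and then $a\notin Y$.
\item Hence $a$ lies in a strict descendant of some sibling $G\neq F$ of $F$ with $a\notin G$.
\end{itemize}
Now set $\mathrm{sub}(N)=\{F\in\mathcal F: N\subseteq F\}$. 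Induction from the leaves gives
$$|\mathrm{sub}(N)|-1\ \ge\ \big|\big(\textstyle\bigcup\mathrm{sub}(N)\big)\setminus N\big|$$
for every node $N$. The point is that only children of $N$ with a single private element contribute new elements at $N$. At $N=\emptyset$ this reads $|\mathcal F|-1\ge n$.
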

Therefore, since our aim is to show that $|\mathcal F|\geq n+1$, we may assume from now on that $\emptyset,[n]\notin\mathcal F$. Moreover, in this case, we can say something more, namely that there exists a minimal element incomparable to a maximal element. This was already proved in \cite{ivan2021minimal}, and below is the proof for completeness.
\begin{lemma}\label{AandBincomparable}Let $\mathcal F$ be a diamond-saturated family with ground set $[n]$ such that $\emptyset,[n]\notin\mathcal F$. Then there exist a minimal and a maximal element of $\mathcal F$ such that they are incomparable.
\end{lemma}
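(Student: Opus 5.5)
We argue by contradiction: suppose every minimal element of $\mathcal F$ is comparable to every maximal element. Since a minimal element $A$ lies below some maximal element, and a maximal element $B$ cannot lie strictly below a minimal one unless they are equal, this says that every minimal set is contained in every maximal set. The plan is to feed the two ``extreme'' sets $\emptyset$ and $[n]$, which are not in $\mathcal F$, into the saturation property. We then combine the resulting structures with the assumed comparability to produce an induced diamond already inside $\mathcal F$.

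First, add $\emptyset$. It lies below every set, so in any diamond it creates it must be the bottom point. Hence there are $X,Y,Z\in\mathcal F$ with $X,Y$ incomparable and $X\cup Y\subseteq Z$, i.e.\ an induced $\Lambda$ in $\mathcal F$. Symmetrically, adding $[n]$ gives $W,X',Y'\in\mathcal F$ with $X',Y'$ incomparable and $W\subseteq X'\cap Y'$, i.e.\ an induced $\mathcal V$.

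Second, we look for a minimal element below both $X$ and $Y$. If some minimal $A\in\mathcal F$ satisfies $A\subseteq X\cap Y$, then $A\neq X,Y$, since $X,Y$ are incomparable. Then $A,X,Y,Z$ is an induced diamond in $\mathcal F$, a contradiction. In particular, if $\mathcal F$ has a unique minimal element we are already done, because that element lies below every set. Dually, no maximal element contains $X'\cup Y'$, and we are done if there is a unique maximal element. So we may assume there are at least two minimal and at least two maximal elements, and:
\begin{itemize}
\item no minimal set lies below both $X$ and $Y$;
\item no maximal set lies above both $X'$ and $Y'$.
\end{itemize}
In particular $X\cap Y$ and $X'\cup Y'$ are not in $\mathcal F$.

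Third, we bring in the assumption that every minimal set is contained in every maximal set. Choose minimal $A_1\subseteq X$ and $A_2\subseteq Y$, so that $A_1\not\subseteq Y$ and $A_2\not\subseteq X$. Choose a maximal $B\supseteq Z$. The assumption gives $A_1\cup A_2\subseteq I$, where $I$ is the intersection of all maximal sets, and dually for the $\mathcal V$ structure. I would then add a carefully chosen intermediate set $S\notin\mathcal F$, such as $A_1\cup A_2$ or $X\cap Y$, and analyse where $S$ can sit in the diamond it creates. For each position of $S$ (bottom, middle or top), the other three points together with the sets $A_i$ and the maximal sets should yield either a diamond in $\mathcal F$ or a minimal set below $X\cap Y$, contradicting the second step.

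The main obstacle is this last case analysis: choosing $S$ so that every possible position of $S$ in the new diamond is ruled out. If one choice of $S$ leaves a surviving case, I would try instead to pick the $\Lambda$ from the first step extremally (for instance with $X\cap Y$ inclusion-maximal). The aim is that a diamond created by adding $X\cap Y$ would yield a $\Lambda$ contradicting this extremal choice, and we would argue dually with the $\mathcal V$.
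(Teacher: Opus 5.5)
Your first two steps are correct, but they barely use the contradiction hypothesis. The third step, where that hypothesis has to do the work, is only a plan, and the concrete candidate you single out cannot succeed. Since $X$ and $Y$ are incomparable, $X\cap Y$ is strictly contained in both, and $Z$ strictly contains both. So $X\cap Y, X, Y, Z$ is already a diamond with three points in $\mathcal F$. Adding $X\cap Y$ therefore always creates a diamond, however the $\Lambda$ was chosen, and saturation gives you nothing. Choosing $X\cap Y$ inclusion-maximal does not change this, because the diamond produced is the one built from your own $X,Y,Z$. The other candidate, $A_1\cup A_2$, is not even known to lie outside $\mathcal F$, and you give no case analysis for it. As written, the proof is missing its decisive step.

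The missing idea is to add a set that is incomparable to every member of $\mathcal F$. Every point of a diamond is comparable to at least two of the other points, so such a set can never lie in a diamond whose other three points are in $\mathcal F$. Under your hypothesis such a set exists. Let $U=\bigcup_{A\in\mathcal A}A$ be the union of the minimal elements. Each minimal set lies in each maximal set, so $U\subseteq B$ for every maximal $B$, and put $C=[n]\setminus U$. For $F\in\mathcal F$, choose a minimal $A\subseteq F$ and a maximal $B\supseteq F$:
\begin{itemize}
\item If $F\subseteq C$, then $A\subseteq C\cap U=\emptyset$, contradicting $\emptyset\notin\mathcal F$.
\item If $C\subseteq F$, then $C\cup U\subseteq B$, so $B=[n]$, contradicting $[n]\notin\mathcal F$.
\end{itemize}
Hence $C\notin\mathcal F$, and adding $C$ creates no diamond, contradicting saturation. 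This is the paper's argument. The $\Lambda$ and $\mathcal V$ you obtained by adding $\emptyset$ and $[n]$ are not needed.
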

\begin{proof}
Indeed, suppose that every minimal element is comparable to every maximal element. 

Let $A_1, A_2, \dots, A_t$ be the minimal elements, and $B_1, B_2,\dots,B_l$ the maximal elements. Let  $X=\bigcup_{i=1}^t A_t$. By our assumption we have that $X\subseteq B_i$ for all $i\in[l]$. We now note that $[n]\setminus X$ is incomparable to $A_i$ and $B_j$, for all $i\in [t]$ and $j\in[l]$. This means that, not only it is not a member of $\mathcal F$, but it is incomparable to every set in $\mathcal F$. Consequently, this implies that it does not form a diamond added when added to $\mathcal F$, a contradiction.
\end{proof}

Next, let $\mathcal A$ be the set of minimal elements of $\mathcal F$ and $\mathcal B_0$ be the set of elements of $\mathcal P([n])$ below a copy of $\Lambda$ in $\mathcal F$. More precisely,
$$\mathcal B_0=\{X\in\mathcal P([n]):\exists P,R, Q\in\mathcal F\text { s.t. }P,Q,R,X\text{ form a $\mathcal D_2$ in which }X\text{ is the minimal element}\}.$$ 

Let $\mathcal B_1$ to be the set of maximal elements of $\mathcal B_0$. Finally, let $$\mathcal B = \{X \in \mathcal B_1 : X \text{ is not a subset of any set in }\mathcal A\}.$$

Additionally, let $G(\mathcal B)$ be all the sets in $\mathcal F$ that generate $\mathcal B$, in a certain sense. More precisely, for every $B\in\mathcal B$, let $G(B)=\{P\in\mathcal F:\exists R,Q\in\mathcal F\text{ such that } B,P,R,Q \text{ form a $\mathcal D_2$ s.t. } B\text{ is the minimal} \\\text{ element, and } P \text{ is one of the two middle elements}\}$. Then $G(\mathcal B)=\bigcup_{B\in\mathcal B}G(B)$. We also define $W=\{i\in[n]:i\notin X\text{ for all }X\in\mathcal A\}$. 

We also define the analogous sets $\mathcal X$, $\mathcal Y$ and $\mathcal Z$, which correspond exactly to the ones defined above, if one changes the diamond-saturated family from $\mathcal F$ to $\{[n]\setminus X:X\in\mathcal F\}$. Let $\mathcal X$ be the set of maximal elements of $\mathcal F$ and $\mathcal Y_0$ be the set of elements above a copy of $\mathcal V$ in $\mathcal F$.  More precisely,
$$\mathcal Y_0=\{X\in\mathcal P([n]):\exists P, Q, R \in\mathcal F\text { s.t. }P,Q,R,X\text{ form a $\mathcal D_2$ in which }X\text{ is the maximal element}\}.$$

Let $\mathcal Y_1$ be the set of minimal elements of $\mathcal Y_0$. Finally, let $$\mathcal Y=\{X \in \mathcal Y_1 : X \text{ does not contain any set of }\mathcal X \text{ as a subset}\}.$$

Additionally, let ${H}(\mathcal Y) =\bigcup_{Y\in\mathcal Y}H(Y)$, where $H(Y)=\{P\in\mathcal F:\exists R,Q\in\mathcal F\text{ such that } Y,P,R,Q \\ \text{ form a $\mathcal D_2$ s.t. } Y\text{ is the maximal element,}\text{ and } P \text{ is one of the two middle elements}\}$. Finally, let $\overline{W} = \{i\in [n] : i \in X \text{ for every } X \in \mathcal X\}$.

The first thing we note is the following property. The proof is identical to the proof of Lemma 3 in \cite{diamondlinear}, if one considers the diamond-saturated family $\{[n]\setminus X:X\in\mathcal F\}$.
\begin{lemma}\label{niceproperty1}
Let $\mathcal A$ and $\mathcal B$  be defined as above. Then $\mathcal A$ and $\mathcal B$ are disjoint, and $\mathcal A \cup \mathcal B$ is a $C_2$-saturated family of $\mathcal P([n])$.
\end{lemma}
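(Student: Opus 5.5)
The proof checks the two parts of the statement directly from the definitions. Disjointness and the absence of a $C_2$ (that is, $\mathcal A\cup\mathcal B$ is an antichain) should come almost for free. The saturation part, that every $S\notin\mathcal A\cup\mathcal B$ is comparable to some member of $\mathcal A\cup\mathcal B$, reduces to showing that such an $S$ must lie in $\mathcal B_0$. No earlier lemma is needed beyond the definitions and the fact that $\mathcal F$ is diamond-free and diamond-saturated.

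\emph{Disjointness.} Every $B\in\mathcal B$ is by definition not a subset of any set in $\mathcal A$. In particular $B\neq A$ for every $A\in\mathcal A$, so $\mathcal A\cap\mathcal B=\emptyset$.

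\emph{Antichain.} Three kinds of pairs need to be checked.
\begin{itemize}
\item $\mathcal A$ is an antichain, since it consists of the minimal elements of $\mathcal F$.
\item $\mathcal B\subseteq\mathcal B_1$ is an antichain, since $\mathcal B_1$ consists of the maximal elements of $\mathcal B_0$.
\item For $A\in\mathcal A$ and $B\in\mathcal B$, the containment $B\subseteq A$ is excluded by definition, so we must rule out $A\subsetneq B$. Suppose $A\subsetneq B$, and let $P,Q,R\in\mathcal F$ be such that $B,P,Q,R$ form a diamond with $B$ at the bottom. Then $P,Q$ are incomparable, $A\subsetneq B\subsetneq P\cap Q$ and $P\cup Q\subseteq R$. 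Hence $A,P,Q,R$ is an induced diamond inside $\mathcal F$; its four sets are distinct because $A$ lies strictly below $P$ and $Q$. This contradicts $\mathcal F$ being diamond-free.
\end{itemize}
This last point is the only place where the diamond-freeness of $\mathcal F$ is used.

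\emph{Saturation.} Let $S\notin\mathcal A\cup\mathcal B$. If $S$ is comparable to some $A\in\mathcal A$, we are done, so assume $S$ is incomparable to every minimal element of $\mathcal F$.
\begin{itemize}
\item Then $S\notin\mathcal F$, because every member of $\mathcal F$ contains some minimal element.
\item By saturation, adding $S$ to $\mathcal F$ creates a diamond, and $S$ must be one of its points. If $S$ were a middle or top point, the bottom point $T$ of that diamond would lie in $\mathcal F$ with $T\subsetneq S$. Some $A\in\mathcal A$ satisfies $A\subseteq T\subsetneq S$, contradicting our assumption. So $S$ is the bottom point, that is, $S\in\mathcal B_0$.
\item Since $\mathcal B_0$ is finite, there is a maximal element $B'\in\mathcal B_1$ with $S\subseteq B'$.
\item If $B'\in\mathcal B$, then $S$ is comparable to $B'$, and $S\neq B'$ because $S\notin\mathcal B$.
\item Otherwise $B'\subseteq A$ for some $A\in\mathcal A$. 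Then $S\subseteq A$, which again contradicts our assumption.
\end{itemize}
In every case $S$ is comparable to a member of $\mathcal A\cup\mathcal B$, so adding $S$ creates a copy of $C_2$.

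The main (though mild) obstacle is the saturation step, where one must see that a set avoiding all minimal elements can only appear as the bottom of a new diamond. The remaining steps are direct consequences of the definitions.
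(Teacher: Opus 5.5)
Your proof is correct and is essentially the direct check from the definitions that the paper delegates to Lemma 3 of its earlier work (applied to the complemented family). Your saturation step already gives the comparability in the form $S\subseteq B'$ with $B'\in\mathcal B$, which is the version the paper extracts in the remark after the lemma. One harmless slip: in the antichain step you should write $A\subsetneq B\subseteq P\cap Q$ rather than $B\subsetneq P\cap Q$, since maximality of $B$ in $\mathcal B_0$ actually forces $B=P\cap Q$; only $A\subsetneq P$ and $A\subsetneq Q$ are used, so the argument is unaffected.
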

In fact, if $B\subsetneq X$ for some $B\in\mathcal B$ and $X\in\mathcal P([n])$, then $X\in\mathcal F$, in which case $X$ contains a minimal element of $\mathcal F$, so an element of $\mathcal A$, or $X\notin\mathcal F$. If the latter is the case, then $X$ cannot be the minimal element of a diamond by the maximality of $B$, thus it contains an element of $\mathcal F$, and consequently an element of $\mathcal A$.

Therefore, for any $X\in\mathcal P([n])$, there exists $A\in\mathcal A$ such that $A\subseteq X$, or there exists $A'\in\mathcal A$ such that $X\subsetneq A$, or there exists $B\in\mathcal B$ such that $X\subseteq B$. This is how we will be applying Lemma~\ref{niceproperty1} in what follows.

The next lemma comes directly from the work in \cite{ivan2021minimal}.
\begin{lemma}\label{nicelemmame}
Suppose that $W\neq\emptyset$. Let $i\in W$ and $A\in\mathcal A$. Then there exists a set $S\in\mathcal F$ such that $A\subseteq S$ and $S\cup\{i\}\in\mathcal F$.
\end{lemma}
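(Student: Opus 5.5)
The plan is a maximality argument followed by a case analysis on the diamond produced by saturation. Since $i\in W$, no set of $\mathcal A$ contains $i$; in particular $i\notin A$. So the family $\{S\in\mathcal F: A\subseteq S,\ i\notin S\}$ is nonempty, because it contains $A$. I will choose $S$ to be a \emph{maximal} member of it and set $T=S\cup\{i\}$. If $T\in\mathcal F$ we are done. Otherwise saturation gives $P,Q,R\in\mathcal F$ forming an induced diamond together with $T$, and I will derive a contradiction in each of the three possible positions of $T$. Two facts will be used repeatedly: $\mathcal F$ is diamond-free, and every set of $\mathcal F$ contains a set of $\mathcal A$, none of which contains $i$.

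\emph{$T$ is the minimal element.} Then $T\subsetneq Q,R\subsetneq U$ with $Q,R$ incomparable. Since $S\subsetneq T$, the sets $S,Q,R,U$ form an induced diamond inside $\mathcal F$, which is impossible.

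\emph{$T$ is a middle element.} Then $P\subsetneq T\subsetneq R$, and $Q$ is incomparable to $T$ with $P\subsetneq Q\subsetneq R$. First, $S$ and $Q$ are incomparable:
\begin{itemize}
\item $Q\subseteq S$ is impossible, since it would give $Q\subseteq T$.
\item If $S\subseteq Q$, then $i\notin Q$ (otherwise $T\subseteq Q$). Hence $Q$ contains $A$ and avoids $i$, so maximality of $S$ forces $Q=S\subseteq T$, again impossible.
\end{itemize}
Also $S\subsetneq R$. Now split according to whether $i\in P$.
\begin{itemize}
\item If $i\notin P$, then $P\subsetneq S$, and $P,S,Q,R$ is a diamond in $\mathcal F$.
\item If $i\in P$, pick $A'\in\mathcal A$ with $A'\subseteq P$. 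Then $i\notin A'$, so $A'\subseteq P\setminus\{i\}\subsetneq S$ (strict because $P\neq T$), and $A'\subsetneq Q$. Thus $A',S,Q,R$ is a diamond in $\mathcal F$.
\end{itemize}
Either way we reach a contradiction.

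\emph{$T$ is the maximal element.} Then $P\subsetneq Q,R\subsetneq T$ with $Q,R$ incomparable. If $i\notin Q\cup R$, then $Q,R\subseteq S$, and neither equals $S$ by incomparability, so $P,Q,R,S$ is a diamond in $\mathcal F$. The same argument works if exactly one of them, say $R$, avoids $i$ but $R\neq S$ and $S$ is incomparable to $Q$, or if $R\subsetneq S$.

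The case I expect to be the main obstacle is the following: $R=S$, $i\in Q$, and $Q\setminus\{i\}\subsetneq S$. Here $P,Q,S,T$ is a genuine diamond in which only $T$ lies outside $\mathcal F$, so there is no immediate contradiction. My first attempt would be to strengthen the choice of $S$ so that this configuration is excluded, for instance by choosing the pair $(S,T)$ extremally among all pairs $S\subseteq S'$ with $S'\in\mathcal F$, $A\subseteq S'$, $i\notin S'$. A second option is to replace $S$ by a set obtained from $Q$ and argue by induction on $|S|$; this uses that $Q\in\mathcal F$ contains $i$ and that $Q\setminus\{i\}$ lies strictly inside $S$. If neither works, I would fall back on the argument of \cite{ivan2021minimal}, from which the statement is taken, and analyse which minimal sets lie below $Q$ in order to produce a diamond inside $\mathcal F$.
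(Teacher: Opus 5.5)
\textbf{There is a genuine gap, and it cannot be closed within your setup.} Your handling of the bottom and middle positions is correct. (In the subcase $i\notin P$, the strictness $P\subsetneq S$ follows from $P\subsetneq Q$ and the incomparability of $S$ and $Q$.) The gap is the top position. More case analysis will not close it, because with your choice of $S$ the conclusion can be false.

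Once the other two positions are excluded, any member of $\mathcal F$ strictly above $T$ would be the top of a diamond with $P,Q,R$. So $S$ is forced to be a maximal element of $\mathcal F$, and its maximality gives no control over the sets below $T$ that contain $i$.

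Concretely, let $n=4$ and
$\mathcal F=\{\{1\},\{2\},\{3\},\{1,2\},\{1,4\},\{2,4\},\{3,4\},\{1,2,3\},\{1,3,4\},\{2,3,4\}\}$.
\begin{itemize}
\item For any members $a\subsetneq d$ of $\mathcal F$, at most one member lies strictly between them, so $\mathcal F$ is diamond-free.
\item Each of the six missing sets $\emptyset,\{4\},\{1,3\},\{2,3\},\{1,2,4\},[4]$ completes a diamond (for instance $\{1\}\subsetneq\{1,2\},\{1,4\}\subsetneq[4]$), so $\mathcal F$ is saturated.
\item Moreover $\emptyset,[4]\notin\mathcal F$, $\mathcal A=\{\{1\},\{2\},\{3\}\}$ and $W=\{4\}$.
\end{itemize}
For $A=\{1\}$ and $i=4$, your $S$ is $\{1,2,3\}$, but $S\cup\{4\}=[4]\notin\mathcal F$. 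The witnessing diamond has $R=\{1,2\}\subsetneq S$ and $i\in Q=\{1,4\}$. This is exactly the configuration you say ``the same argument'' handles. It is not handled, since $Q\not\subseteq S$ and $Q,S$ have no common upper bound in $\mathcal F$. You also never treat the case $i\in Q\cap R$. The lemma itself holds in this example with $S=A$.

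\textbf{The missing idea} is to keep $S\cup\{i\}$ below some member of $\mathcal F$, which rules out the top position outright. The paper does this as follows.
\begin{itemize}
\item $A\cup\{i\}$ cannot be the bottom of a diamond. It cannot be the top either: the bottom could be taken in $\mathcal A$, it avoids $i$, and so it would lie strictly inside $A$.
\item Hence if $A\cup\{i\}\notin\mathcal F$, it is a middle element of a diamond whose bottom is $A$ and whose top $C$ lies in $\mathcal F$.
\item Choose the other middle element $B$ maximal over all such diamonds. Then $i\notin B$ and $B\cup\{i\}\subseteq C$.
\item If $B\cup\{i\}\notin\mathcal F$, it lies strictly below $C$, so it cannot be a top. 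It cannot be a bottom since it contains $A$. As a middle element it would produce a strictly larger candidate for $B$. Hence $S=B$ works.
\end{itemize}

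Alternatively, your own argument goes through essentially verbatim if you maximise only over those $S$ for which $S\cup\{i\}$ is contained in some member of $\mathcal F$:
\begin{itemize}
\item $A$ is eligible by the first step above.
\item In the top case, $P,Q,R$ together with the member of $\mathcal F$ strictly above $T$ form a diamond in $\mathcal F$.
\item In your middle case, $Q\cup\{i\}\subseteq R$. So $Q$ stays eligible, and maximality still forces $Q=S$ when $S\subseteq Q$.
\end{itemize}
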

\begin{proof}
By the definition of $W$, $i\notin A$. If $A\cup\{i\}\in\mathcal F$, we are done. Therefore, we may assume that $A\cup\{i\}\notin\mathcal F$. Hence, there exist $3$ elements $B,C,D \in \mathcal{F}$ that form a diamond with $A\cup\{i\}$. First, $A\cup\{i\}$ cannot be the minimal element of the diamond as this would mean that $A,B,C,D$ is a diamond inside $\mathcal F$. Moreover, $A\cup\{i\}$ cannot be the maximal element of the diamond either, since, without loss of generality, we may assume that the minimal element of the diamond, say $D$, which has size  at most $|A|-1$, is in $\mathcal A$.  This implies that $i\notin D$ as $i\in W$, so thus $D\subsetneq A$, a contradiction. Therefore, $A\cup\{i\}$ has to be one of the middle elements, as depicted below.
\begin{figure}[hbt!]
\hspace{0.1cm}\\
\centering
\begin{tikzpicture}
\node[label=above:$C$] (top) at (4,1) {$\bullet$}; 
\node[label=left:$A\cup\{i\}$] (left) at (3,0) {$\bullet$};
\node[label=right:$B$] (right) at (5,0) {$\bullet$};
\node[label=below:$D$] (bottom) at (4,-1) {$\bullet$};
\draw (top) -- (left) -- (bottom) -- (right) -- (top);
\end{tikzpicture}
\end{figure}
\FloatBarrier
Let $B$ be maximal with respect to this configuration. Without loss of generality, we may assume that $D$ is a minimal element of $\mathcal F$. This means that $i\notin D$, and so $D\subseteq A$. Since $D,A\in\mathcal A$, we must have that $D=A$. Consequently, this implies that $A\subset B$, and since $A\cup\{i\}$ and $B$ are incomparable, we have that $i\notin B$. If $B\cup\{i\}\in\mathcal F$, we are done. 

Therefore, we may assume that $B\cup\{i\}\notin\mathcal F$. Thus, there exist $X,Y,Z\in\mathcal F$ such that $X,Y,Z,B\cup\{i\}$ form a diamond. Since $A\subset\B\cup\{i\}\subset C$, $B\cup\{i\}$ cannot be the maximal or the minimal element of the diamond. Thus, we have the following diagram.

\begin{figure}[hbt!]
\hspace{0.1cm}\\
\centering
\begin{tikzpicture}
\node[label=above:$X$] (top) at (4,1) {$\bullet$}; 
\node[label=left:$B\cup\{i\}$] (left) at (3,0) {$\bullet$};
\node[label=right:$Y$] (right) at (5,0) {$\bullet$};
\node[label=below:$Z$] (bottom) at (4,-1) {$\bullet$};
\draw (top) -- (left) -- (bottom) -- (right) -- (top);
\end{tikzpicture}
\end{figure}
\FloatBarrier
Again, without loss of generality, we may assume that $Z\in\mathcal A$, hence $Z\subseteq B$. Since $D\subsetneq B$, $B\notin \mathcal{A}$, thus $Z\neq B$ and $Z\subsetneq B$. Since $B, Z, Y, X$ cannot form a diamond, $B$ and $Y$ must be comparable. If $Y\subseteq B$, then $Y\subset B\cup\{i\}$, a contradiction. Thus, $B\subsetneq Y$, which also implies that $i\notin Y$. This means that $A\cup\{i\}, A, Y, X$ form a diamond, contradicting the maximality of $B$.
\end{proof}
\begin{lemma}\label{sizeA} Let $A\in\mathcal A$. Then there exist at least $|A|$ elements $F\in\mathcal F$ such that $|F|\geq|A|$. Similarly, if $Y\in\mathcal Y$, there exist at least $n-|\mathcal Y|$ of size at most $|Y|$.
\end{lemma}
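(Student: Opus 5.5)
The plan is to attach to each element $i\in A$ a set $P_i\in\mathcal F$ with $|P_i|\geq|A|$, and to check that the sets $P_i$ are pairwise distinct. The tool is diamond-saturation applied to the sets $A\setminus\{i\}$. The second assertion will be obtained from the first by passing to the complemented family.

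Fix $A\in\mathcal A$ and $i\in A$. Since $A$ is minimal in $\mathcal F$, the set $A\setminus\{i\}$ is not in $\mathcal F$. Hence adding it creates a diamond with three sets of $\mathcal F$, and I locate $A\setminus\{i\}$ in that diamond.

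\begin{itemize}
\item It cannot be the maximal element. The minimal element of the diamond would then be a member of $\mathcal F$ strictly inside $A$, contradicting minimality.
\item It cannot be a middle element, for the same reason: the bottom of the diamond would again be a member of $\mathcal F$ strictly inside $A$.
\item So $A\setminus\{i\}$ is the minimal element of a diamond $A\setminus\{i\},P,Q,R$. Here $P,Q,R\in\mathcal F$, the middle sets $P,Q$ are incomparable, and $R$ is the top.
\end{itemize}

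Next I show that $i$ is missing from one of $P,Q$. Suppose $i\in P\cap Q$. Then $A\subseteq P,Q$. Also $A\neq P$, since otherwise $P\subseteq Q$; likewise $A\neq Q$. So $A,P,Q,R$ would be a diamond inside $\mathcal F$, which is impossible. Hence one middle set, call it $P_i$, satisfies $A\setminus\{i\}\subseteq P_i$ and $i\notin P_i$. Since $P_i\neq A\setminus\{i\}$ (as $P_i\in\mathcal F$), it contains an element outside $A$. Therefore $|P_i|\geq|A|$.

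The sets $P_i$ are pairwise distinct. For $j\neq i$ we have $j\in A\setminus\{i\}\subseteq P_i$ but $j\notin P_j$. This gives $|A|$ distinct members of $\mathcal F$ of size at least $|A|$, proving the first assertion.

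For the second assertion I pass to the diamond-saturated family $\{[n]\setminus X:X\in\mathcal F\}$, under which sizes $s$ become $n-s$ and minimal sets become maximal ones. Applying the argument above to a maximal set $X\in\mathcal X$ yields at least $n-|X|$ members of $\mathcal F$ of size at most $|X|$. I read the stated inequality ``$n-|\mathcal Y|$'' as the intended dual bound $n-|Y|$.

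The expected obstacle is exactly this dual statement as literally phrased for $Y\in\mathcal Y$ rather than $X\in\mathcal X$. There I would run the same scheme with the sets $Y\cup\{i\}$ for $i\notin Y$. Two facts are needed to push the argument through:
\begin{itemize}
\item $Y\cup\{i\}$ cannot be the top or a middle element of a diamond. This should follow from the minimality of $Y$ in $\mathcal Y_0$ together with the fact that $Y$ contains no maximal element.
\item The analogous middle set must avoid containing $Y\cup\{i\}$ in the right way, which should follow from $\mathcal F$ being diamond-free.
\end{itemize}
I expect these verifications to be the delicate part.
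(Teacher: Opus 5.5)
Your argument is correct and is essentially the paper's proof. You realise $A\setminus\{i\}$ as the bottom of a diamond, show that one middle set $P_i$ misses $i$ (your uniform treatment absorbs the paper's separate case where a middle set equals $A$), distinguish the $P_i$ via $A\setminus P_i=\{i\}$, and obtain the dual by complementing. Your reading of the garbled second part is the right one, since the paper's proof and its later use in the proof of Theorem~\ref{maintheorem} only concern a maximal set $X\in\mathcal X$ with the bound $n-|X|$; the closing speculation about $Y\in\mathcal Y$ and the sets $Y\cup\{i\}$ is unnecessary and can simply be dropped.
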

\begin{proof}
It is enough to find sets $A_i$ such that $A\setminus A_i=\{i\}$ and $|A_i|\geq|A|$ for all $i\in A$. To that effect, let $i\in A$. Since $A$ is a minimal element of $\mathcal F$, $A\setminus\{i\}\notin\mathcal F$. Therefore, $A\setminus\{i\}$ must form a diamond with 3 other elements of $\mathcal F$. Again, by minimality, $A\setminus\{i\}$ has to be the minimal element of the diamond. Let $C,D\in\mathcal F$ be the two incomparable elements of the diamond above it. To avoid a diamond being formed in $\mathcal F$, either $C$ or $D$ is $A$, or at least one of $C$ and $D$ does not contain $i$. If, without loss of generality, $C=A$, then $A$ and $D$ are incomparable, so $i\notin D$. Thus $A\setminus D=\{i\}$ and $|D|\geq|A|$. If $D\neq A$ and $C\neq A$, then, without loss of generality, $i\notin C$, so $C\setminus A=\{i\}$ and $|C|\geq |A|$.

The second part of the claim can be obtained by applying the above to the diamond-saturated family $\mathcal F^c=\{[n]\setminus F:F\in\mathcal F\}$.
\end{proof}
\begin{lemma}\label{sizeB}Let $B\in\mathcal B$. For every $i\notin B$, there exists $X_i\in\mathcal F$ such that $X_i\subseteq B\cup\{i\}$ and $i\in B_i$. Similarly, given $C\in\mathcal Y$, for every $i\in C$, there exists $Y_i\in\mathcal F$ such that $C\setminus\{i\}\subseteq Y_i$ and $i\notin Y_i$.
\end{lemma}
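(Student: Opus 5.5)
The plan is to use the maximality of $B$ in $\mathcal B_0$, together with the fact that $B$ is the bottom of a copy of $\mathcal D_2$ whose other three points lie in $\mathcal F$, and then obtain the statement for $\mathcal Y$ by complementation. (I read the conclusion ``$i\in B_i$'' as ``$i\in X_i$''.)

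\emph{First, $B$ contains no member of $\mathcal F$.} Since $B\in\mathcal B\subseteq\mathcal B_0$, there are $P,Q,R\in\mathcal F$ with $B\subsetneq P,Q\subsetneq R$ and $P,Q$ incomparable. Suppose some $F\in\mathcal F$ satisfies $F\subseteq B$. Then $F\subsetneq P$ and $F\subsetneq Q$, so $F,P,Q,R$ is an induced diamond inside $\mathcal F$. This is a contradiction. The case $F=B$ is included here, so in particular $B\notin\mathcal F$.

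\emph{Second, fix $i\notin B$ and put $X=B\cup\{i\}$.} If $X\in\mathcal F$, we take $X_i=X$ and are done. Otherwise $X\notin\mathcal F$, so by saturation $X$ forms a diamond with three members of $\mathcal F$. We have $B\subsetneq X$ and $B$ is a maximal element of $\mathcal B_0$, so $X\notin\mathcal B_0$. Hence $X$ is not the minimal element of that diamond. It is therefore a middle or the top element, so some member of $\mathcal F$ lies below $X$. Passing to a minimal element, we get $A\in\mathcal A$ with $A\subseteq X$.

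\emph{Third, conclude.} By the first step $A\not\subseteq B$. Since $A\subseteq B\cup\{i\}$, this forces $i\in A$, and $X_i=A$ works. This is just the remark following Lemma~\ref{niceproperty1} made precise. The only point needing care is showing that $A\subseteq B$ is impossible. That is handled by the first step, via the diamond witnessing $B\in\mathcal B_0$, and not by the condition defining $\mathcal B$, which only concerns $B$ lying below sets of $\mathcal A$.

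\emph{The second statement.} Apply the argument to the diamond-saturated family $\mathcal F^c=\{[n]\setminus F:F\in\mathcal F\}$. Under complementation, $\mathcal Y$ plays the role of $\mathcal B$: $\mathcal Y_0$ maps to the corresponding $\mathcal B_0$, and minimal elements of $\mathcal Y_0$ map to maximal ones. For $C\in\mathcal Y$ and $i\in C$ we have $i\notin[n]\setminus C$, so the first part gives $F\in\mathcal F$ with $[n]\setminus F\subseteq([n]\setminus C)\cup\{i\}$ and $i\in[n]\setminus F$. Setting $Y_i=F$, this says exactly that $C\setminus\{i\}\subseteq Y_i$ and $i\notin Y_i$.
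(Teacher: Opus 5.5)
Your proof is correct and follows the paper's own argument. You rule out $B\cup\{i\}$ as the bottom of its diamond, take a member of $\mathcal F$ below it, and use the diamond witnessing $B\in\mathcal B_0$ to show that this member cannot lie inside $B$, so it contains $i$; the second part then follows by complementation, exactly as in the paper. Your direct appeal to the maximality of $B$ in $\mathcal B_0$ is cleaner than the paper's detour through $\mathcal A$ at that step, and passing to a minimal element of $\mathcal A$ is harmless but not needed.
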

\begin{proof}
Let $i\notin B$. If $B\cup\{i\}\in\mathcal F$, then we are done. Thus, we may assume that $B\cup\{i\}\notin\mathcal F$, and so it forms a diamond with 3 other elements of $\mathcal F$. We note that $B\cup\{i\}$ cannot be the minimal element of the diamond. Indeed, if it is, by the maximality of $B$, it cannot be in $\mathcal B$. Thus, there exists $A\in\mathcal A$ such that $B\cup\{i\}\subset A$, which implies that $B\subset A$, a contradiction. 

Therefore, there exists $X\in\mathcal F$ such that $X\subset B\cup\{i\}$. We now recall that, since $B\in\mathcal B$, there exist elements $C,D,E\in\mathcal F$ such that $B,C,D,E$ form a diamond, where $B$ is the minimal element. If $X\subseteq B$, then $X,C,D,E$ form a diamond in $\mathcal F$, a contradiction. Thus $X\subset B\cup\{i\}$ and $i\in X$, which finishes the first part of the lemma. The second part follows immediately by taking complements.
\end{proof}
\begin{lemma}\label{G(B) and A are disjoint} Let $G(\mathcal B),\mathcal A, H(\mathcal Y), \mathcal X$ be as defined above. Then $G(\mathcal B)\cap \mathcal A =\emptyset$, and $H(\mathcal Y) \cap \mathcal X= \emptyset$.
\end{lemma}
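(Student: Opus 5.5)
Suppose for contradiction that some $P\in G(\mathcal B)\cap\mathcal A$. By the definition of $G(\mathcal B)$ there are $B\in\mathcal B$ and $R,Q\in\mathcal F$ such that $B,P,R,Q$ form a diamond with $B$ as the minimal element and $P$ as one of the two middle elements. Write $Q$ for the other middle element and $R$ for the top. In particular $B\subsetneq P$.

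Since $P\in\mathcal A$, this says $B$ is a (strict) subset of a set of $\mathcal A$. That directly contradicts the definition of $\mathcal B$, which consists of those $X\in\mathcal B_1$ that are not subsets of any set in $\mathcal A$. So the first claim follows at once, with no case analysis. (Even if "subset" were read strictly, $B\subsetneq P$ still gives the contradiction.)

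For the second claim I would pass to complements. Let $\mathcal F^c=\{[n]\setminus F:F\in\mathcal F\}$, which is again diamond-saturated. Taking complements swaps minimal and maximal elements and turns each diamond upside down. So $\mathcal X,\mathcal Y,H(\mathcal Y)$ for $\mathcal F$ are precisely the complements of $\mathcal A,\mathcal B,G(\mathcal B)$ computed for $\mathcal F^c$, and the first part applied to $\mathcal F^c$ gives $H(\mathcal Y)\cap\mathcal X=\emptyset$.

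It can also be checked directly. If $P\in H(Y)\cap\mathcal X$ for some $Y\in\mathcal Y$, then $P$ is a middle element of a diamond whose top is $Y$, so $P\subsetneq Y$. Then $Y$ contains the set $P\in\mathcal X$ as a subset, contradicting the definition of $\mathcal Y$.

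There is no real obstacle here. The only point needing care is that the middle element sits strictly between the bottom and top of the diamond, which holds because the four sets of a diamond are distinct.
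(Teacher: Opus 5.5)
Your proposal is correct and follows essentially the same argument as the paper. A middle element $P\in\mathcal A$ of a diamond with bottom $B\in\mathcal B$ gives $B\subsetneq P$, contradicting the definition of $\mathcal B$, and the second claim follows by the analogous argument, or by complementation.
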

\begin{proof}
Suppose that there exists $A\in G(\mathcal B)\cap \mathcal A$. Since $A\in G(\mathcal B)$, this means that there exists $B\in\mathcal B$ such that $A\in G(B)$, which implies that $B\subset A$, as $A$ and $B$ are part of a diamond in which $B$ is the minimal element. However, this is a direct contradiction to the definition of $\mathcal B$. A completely analogous proof gives that $H(\mathcal Y) \cap \mathcal X= \emptyset$. 
\end{proof}
In what follows, we will make use of the following standard lemma, whose simple proof we omit.
\begin{lemma}\label{appendixproposition} Let $k,h,n\in \mathbb{N}$, and $A_1,\dots,A_k$ be non-empty subsets of $[n]$. Suppose that for every $B\in \binom{[n]}{h}$, there exists $i\in[k]$ such that $A_i\subseteq B$. Then $k\geq n - h + 1$.
\end{lemma}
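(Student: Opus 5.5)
The plan is to argue by contradiction with a hitting-set (transversal) argument. Suppose the hypothesis holds but $k\leq n-h$. Since every $A_i$ is non-empty, I will choose one element $a_i\in A_i$ for each $i\in[k]$, and let $T=\{a_1,\dots,a_k\}$. Then $|T|\leq k\leq n-h$, so $|[n]\setminus T|\geq h$.

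Next I take any $B\in\binom{[n]}{h}$ with $B\subseteq [n]\setminus T$, which exists by the size bound just obtained. For every $i\in[k]$ we have $a_i\in A_i$ but $a_i\notin B$, so $A_i\not\subseteq B$. This contradicts the assumption that every $h$-subset of $[n]$ contains some $A_i$, and hence $k\geq n-h+1$.

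Two degenerate cases should be noted. If $h>n$, then $n-h+1\leq 0$ and the conclusion is trivial. If $k=0$, the same argument applies with $T=\emptyset$, provided $h\leq n$.

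There is no real obstacle here. The only point that needs care is that each $A_i$ is non-empty, since otherwise the empty set would be contained in every $B$ and the bound fails. This is exactly why the lemma asks for non-empty sets.
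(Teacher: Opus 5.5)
Your proof is correct: picking one $a_i\in A_i$ for each $i$ gives a transversal $T$ with $|T|\le k$, and if $k\le n-h$, any $h$-subset of $[n]\setminus T$ contains no $A_i$, a contradiction. The paper omits its proof as standard, and your transversal argument is presumably the intended one; it also covers the paper's use in Claim 2, where the lemma is applied to the non-empty sets $A_Y\setminus(S\setminus\{x\})$ inside the smaller ground set $[n]\setminus(S\cup A_1\cup\dots\cup A_{j-1})$.
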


\section{Lower bounding the size of $\mathcal A\cup G(\mathcal B)$}

We begin with some notation. Let $\mathcal G\subseteq \cP([n])$ be a family of sets. For every $i\in [n]$ we define $f(i,\mathcal G)$ to be the family of sets in $\mathcal G$ that contain $i$, namely $\{G\in \mathcal G: i\in G\}$. We also define $W(\mathcal G)$ to be the set of elements that appear in no set of $\mathcal G$, namely $\{i\in [n] : f(i,\mathcal G) = \emptyset\}$.

A crucial observation is that, given $W'\subset[n]\backslash W(\mathcal G)$, the collection of non-empty families $\{f(i,\mathcal G) : i\in W' \}$ is partially ordered by inclusion.

Returning to our diamond-saturated family and the setup defined in the previous section, if we take $\mathcal G=\mathcal A$, we have that $W = W(\mathcal A)$.

We now construct a sequence of  distinct singletons $a_1,\dots,a_k\in [n]$  and nested families $\mathcal A_1\supsetneq\mathcal A_2\supsetneq\dots\supsetneq\mathcal A_k\neq\emptyset $ as follows. Let $\mathcal A = \mathcal A_1$. Clearly $[n]\setminus W(\mathcal A_1)\neq\emptyset$, so we may pick $a_1$ such that $f(a_1,\mathcal A_1)$ is an inclusion-minimal element of $\{ f(i,\mathcal A_1) : i\in [n]\setminus W(\mathcal A_1)\}$. Next, let $\mathcal A_2 = \mathcal A_1\setminus f(a_1,\mathcal A_1)$. For each $i\geq 2$, if $[n]\setminus W(\mathcal A_i)$ is not empty, pick $a_i$ such that $f({a_i},\mathcal A_i)$ is an inclusion-minimal element of $\{ f(j,\mathcal A_i) : j\in [n]\setminus W(\mathcal A_i)\}$. If, on the other hand, $[n]\setminus W(\mathcal A_i)= \emptyset$ (which is equivalent to $\mathcal A_i=\emptyset$), we terminate the sequence at $i-1$, thus making $k = i-1$.

This way, we obtain a sequence $a_1,\dots,a_k$ of distinct singletons in $[n]$, and a strictly nested sequence of families $\mathcal A=\mathcal A_1\supsetneq\mathcal A_2\supsetneq\dots\supsetneq\mathcal A_k\neq\emptyset$. For all $1\leq i\leq k$, we define $A_i$ to be the set of singletons that cannot be separated from $a_i$ in $\mathcal A_i$, or more formally,  $A_i = \{j\in [n] : f(j,\mathcal A_i) = f(a_i,\mathcal A_i)\}$. We notice that these sets are pairwise disjoint. Indeed, if  $z\in A_i\cap A_j$ for some $i<j$, then $f(z,\mathcal A_i)=f(a_i,\mathcal A_i)$, and $\mathcal A_j\subseteq\mathcal A_i\setminus f(z,\mathcal A_i)$, which implies that no set in $\mathcal A_j$ contains $z$, a contradiction. Therefore $A_i\cap A_j=\emptyset$ for all $i\neq j$.

In what follows, we will construct $k$ distinct elements of $\mathcal A$, and, under certain assumptions, $\sum_{i=1}^k (|A_i|-1)$ distinct elements of $G(\mathcal B)$. Since $\mathcal A$ and $G(\mathcal B)$ are disjoint, we obtain a lower bound on $|\mathcal A\cup G(\mathcal B)|$. We start with a couple of short helpful lemmas. The first tells us that not only the $A_i$ are disjoint, but they actually partition the `arena', i.e., $[n]\setminus W$.


\begin{lemma}\label{proposition on union of A_i equals [n] backslash W}
For the sets defined above, we have that $\bigcup_{i=1}^k A_i=[n]\setminus W$.
\end{lemma}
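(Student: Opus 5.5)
We prove the two inclusions separately. For $\bigcup_{i=1}^k A_i\subseteq[n]\setminus W$: if $j\in A_i$ then $f(j,\mathcal A_i)=f(a_i,\mathcal A_i)$. This family is non-empty, because $a_i\notin W(\mathcal A_i)$. Hence $j$ lies in some set of $\mathcal A_i\subseteq\mathcal A$, so $j\notin W(\mathcal A)=W$.

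For the reverse inclusion, fix $j\in[n]\setminus W$, so that $f(j,\mathcal A_1)\neq\emptyset$. The families $f(j,\mathcal A_i)=f(j,\mathcal A)\cap\mathcal A_i$ are nested and non-increasing in $i$. By construction the process stops with $\mathcal A_{k+1}=\mathcal A_k\setminus f(a_k,\mathcal A_k)=\emptyset$. Therefore $f(j,\mathcal A_{k+1})=\emptyset$, and there is a largest index $i\le k$ with $f(j,\mathcal A_i)\neq\emptyset$. For this $i$ we have $f(j,\mathcal A_{i+1})=\emptyset$. Since $\mathcal A_{i+1}=\mathcal A_i\setminus f(a_i,\mathcal A_i)$, this gives
\[
f(j,\mathcal A_i)\subseteq f(a_i,\mathcal A_i).
\]

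The key step is to turn this inclusion into an equality. Here we use the choice of $a_i$: $f(a_i,\mathcal A_i)$ is an inclusion-minimal element of $\{f(l,\mathcal A_i): l\in[n]\setminus W(\mathcal A_i)\}$. The index $j$ belongs to $[n]\setminus W(\mathcal A_i)$, since $f(j,\mathcal A_i)\neq\emptyset$. So $f(j,\mathcal A_i)$ is a member of this collection and lies below $f(a_i,\mathcal A_i)$. Minimality then forces $f(j,\mathcal A_i)=f(a_i,\mathcal A_i)$, that is, $j\in A_i$.

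The only subtle points are bookkeeping. First, the termination convention means $\mathcal A_{k+1}$, defined as $\mathcal A_k\setminus f(a_k,\mathcal A_k)$, is empty. Second, minimality is used only against a comparable element, so the partial-order observation made above is not even needed. Neither point is a real obstacle.
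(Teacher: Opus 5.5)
Your proof is correct and is essentially the paper's argument: you take the largest index $i$ with $f(j,\mathcal A_i)\neq\emptyset$ and combine this maximality with the inclusion-minimality of $f(a_i,\mathcal A_i)$. The paper argues by contradiction: if $j\notin A_i$, minimality yields some $D\in f(j,\mathcal A_i)\setminus f(a_i,\mathcal A_i)$, which survives into $\mathcal A_{i+1}$. That is just the contrapositive of your direct deduction of $f(j,\mathcal A_i)\subseteq f(a_i,\mathcal A_i)$ and then equality.
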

\begin{proof}
One direction is clear, as for all $1\leq i\leq k$, $A_i\subseteq [n]\setminus W$, by construction. In the other direction, let $x\in [n]\setminus W$. By definition, there exists $X\in\mathcal A=\mathcal A_1$ such that $x\in X$, thus $x\notin W(\mathcal A_1)$. Let $j$ be the largest $j'\in[k]$ such that $x\notin W(\mathcal A_{j'})$, which also gives that $f(x,\mathcal A_j)\neq\emptyset$. If $x\not\in A_j$, then $f(a_j,\mathcal A_j)\neq f(x,\mathcal A_j)$. Hence, by the inclusion-minimality of $f(a_j,\mathcal A_j)$, there exists a set $D\in f(x,\mathcal A_j)\setminus f(a_j,\mathcal A_j)\subseteq \mathcal A_j\setminus f(a_j,\mathcal A_j)$. This implies that $j\neq k$, and that $D\in \mathcal A_{j+1}$. Thus $x\notin W(\mathcal A_{j+1})$, contradicting the maximality of $j$. We therefore have that $x\in A_j$, and consequently, $[n]\setminus W\subseteq\bigcup_{i=1}^k A_i$, which finishes the proof.
\end{proof}

\begin{lemma}\label{Proposition on property of elements of mathcal A_j } Let $1\leq j\leq k$ and $T\in\mathcal A_j$. Then $A_l\cap T = \emptyset$ for all $l\leq j-1$.
\end{lemma}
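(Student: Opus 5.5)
The claim should follow directly from the construction of the nested families $\mathcal A_1\supsetneq\mathcal A_2\supsetneq\dots$, with no use of the diamond-saturation properties. Fix $1\leq j\leq k$, $T\in\mathcal A_j$ and $l\leq j-1$, and suppose for contradiction that some $z\in A_l\cap T$ exists. We will show that $T$ must have been removed when passing from $\mathcal A_l$ to $\mathcal A_{l+1}$. This is incompatible with $T\in\mathcal A_j\subseteq\mathcal A_{l+1}$, since $j\geq l+1$.

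First, the nesting gives $T\in\mathcal A_j\subseteq\mathcal A_l$. Next, $z\in A_l$ means by definition that $f(z,\mathcal A_l)=f(a_l,\mathcal A_l)$. Since $T\in\mathcal A_l$ and $z\in T$, we get $T\in f(z,\mathcal A_l)=f(a_l,\mathcal A_l)$. By construction, $\mathcal A_{l+1}=\mathcal A_l\setminus f(a_l,\mathcal A_l)$, so $T\notin\mathcal A_{l+1}$. On the other hand, $l+1\leq j$, so the nesting gives $T\in\mathcal A_j\subseteq\mathcal A_{l+1}$, which is a contradiction. Hence $A_l\cap T=\emptyset$.

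The argument is the same as the one used earlier to show that the $A_i$ are pairwise disjoint. There is no real obstacle. The only point needing care is that the nesting $\mathcal A_j\subseteq\mathcal A_{l+1}\subseteq\mathcal A_l$ is valid for all $l<j\leq k$, which holds because the sequence is defined by successive removals.
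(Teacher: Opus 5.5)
Your proof is correct and is essentially the paper's argument: the paper inducts on $j$, using $\mathcal A_j\subseteq\mathcal A_{j-1}$ to dispose of $l\leq j-2$ and then running exactly your argument for $l=j-1$. You skip the induction by applying that step directly for each $l$ via the nesting $\mathcal A_j\subseteq\mathcal A_{l+1}=\mathcal A_l\setminus f(a_l,\mathcal A_l)$.
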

\begin{proof}
The proof is by induction on $j$. If $j=1$, the claim is vacuously true. Suppose now that $j>1$, and that the claim holds for $j-1$. First, since $\mathcal A_j = \mathcal A_{j-1}\setminus f(a_{j-1},\mathcal A_{j-1})\subset\mathcal A_{j-1}$, we have by the induction hypothesis that $A_l\cap T = \emptyset$ for all $1\leq l\leq j-2$.

Next, suppose that there exists $x\in T\cap A_{j-1}$. Since $T\in\mathcal A_j\subset A_{j-1}$, we have that $T\in f(x,\mathcal A_{j-1})$. However, since $x\in A_{j-1}$, we also have that $f(x,\mathcal A_{j-1}) = f(a_{j-1},\mathcal A_{j-1})$, hence $T\in f(a_{j-1},\cA_{j-1})$. By construction, this means that $T\not\in \mathcal A_j$, a contradiction. Therefore $T\cap A_{j-1} = \emptyset$, which finishes the induction, and consequently the proof. 
\end{proof}
We are now ready to generate $k$ distinct elements of $\mathcal A$.
\begin{proposition}\label{Result on the elements of mathcal A}
Let $1\leq i\leq k$. Then there exists $X_i\in \mathcal A$ such that $X_i\cap \left(A_1\cup A_2\cup\dots\cup A_i\right) = A_i$.
\end{proposition}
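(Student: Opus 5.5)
Fix $1\leq i\leq k$. The set $X_i$ will be any member of $f(a_i,\mathcal A_i)$. Since $a_i\notin W(\mathcal A_i)$, this family is nonempty, and every member lies in $\mathcal A_i\subseteq\mathcal A$. So $X_i\in\mathcal A$ and $a_i\in X_i$. The intersection property then reduces to two inclusions.

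\textbf{No element of an earlier block.} Since $X_i\in\mathcal A_i$, Lemma~\ref{Proposition on property of elements of mathcal A_j } gives $X_i\cap A_l=\emptyset$ for all $l\leq i-1$. Hence
$$X_i\cap\left(A_1\cup\dots\cup A_i\right)=X_i\cap A_i.$$

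\textbf{All of $A_i$.} It remains to show $A_i\subseteq X_i$. Let $j\in A_i$. By definition of $A_i$ we have $f(j,\mathcal A_i)=f(a_i,\mathcal A_i)$. Because $X_i\in f(a_i,\mathcal A_i)$, it follows that $X_i\in f(j,\mathcal A_i)$, that is, $j\in X_i$. Therefore $X_i\cap A_i=A_i$, and combining the two inclusions gives $X_i\cap(A_1\cup\dots\cup A_i)=A_i$.

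No step here is a real obstacle. The only care needed is to take $X_i$ from $f(a_i,\mathcal A_i)$, a subfamily of $\mathcal A_i$, rather than from all of $\mathcal A$, so that Lemma~\ref{Proposition on property of elements of mathcal A_j } applies. The inclusion-minimality of $f(a_i,\mathcal A_i)$ is not needed for this statement. Distinctness of the $X_i$ for different $i$ is also immediate: $X_i$ meets $A_i$ while $X_{i'}$ for $i'>i$ avoids $A_i$, which is presumably how the $k$ distinct elements of $\mathcal A$ will be extracted next.
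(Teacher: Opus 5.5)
Your proof is correct and follows the paper's argument essentially step for step: take $X_i\in f(a_i,\mathcal A_i)\subseteq\mathcal A_i$, use Lemma~\ref{Proposition on property of elements of mathcal A_j } to rule out the earlier blocks, and use $f(j,\mathcal A_i)=f(a_i,\mathcal A_i)$ for $j\in A_i$ to get $A_i\subseteq X_i$. Your remark on distinctness of the $X_i$ is also exactly how the paper uses this result in the proof of Claim~1.
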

\begin{proof}
By construction, $f(a_i,\mathcal A_i)$ is not empty, so there exists $X_i\in f(a_i,\cA_i)$.  Since $f(a_i,\cA_i)\subseteq \mathcal A_{i}$, by Lemma~\ref{Proposition on property of elements of mathcal A_j }, $X_i\cap A_j = \emptyset$ for all $j\leq i-1$. Furthermore, since $X_i\in f(a_i,\mathcal A_i)$, we have that $X_i \in f(x,\mathcal A_i)$ for all $x\in A_i$, which implies that $x\in X_i$ for all $x\in A_i$. Therefore $A_i\subseteq X_i$. Putting everything together, we indeed have that $X_i\cap \left(A_1\cup A_2\cup\dots\cup A_i\right) = A_i$, as claimed.
\end{proof}

We now show that, under certain assumptions, we can generate $\sum_{i=1}^k(|A_i|-1)$ distinct sets in $G(\mathcal B)$. This is one of the main building blocks for reaching a lower bound on $|\mathcal A\cup G(\mathcal B)|$, which we do at the end of this section. 

Let us denote by $m_{\mathcal A}$ the maximal size of a set in $\mathcal A$. In other words, $m_{\mathcal A} = \max_{A\in \mathcal A} |A|$.

\begin{proposition}\label{Main Lemma of second section with no assumptions on W} Suppose that $|\mathcal A\cup G(\mathcal B)|\leq n-m_{\mathcal A}$. 

Then, given $1\leq j\leq k$ such that $|A_j|>1$, and $x\in A_j\setminus\{a_j\}$, there exists $B_x\in  G(\cB)$ such that $B_x \cap A_j = A_j\setminus\{x\}$, and $A_l\backslash \{a_l\}\subseteq B_x$ for all $l\leq j-1$.

Furthermore, there exists $C_x\in G(\mathcal B)$ such that $C_x\neq B_x$, $A_j\setminus\{x\}\subseteq C_x$, and $A_l\setminus\{a_l\}\subseteq C_x$ for all $l\leq j-1$.
\end{proposition}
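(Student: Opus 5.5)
The plan is to feed a carefully chosen test set into the $C_2$-saturation of $\mathcal A\cup\mathcal B$ (Lemma~\ref{niceproperty1}), and use the counting hypothesis $|\mathcal A\cup G(\mathcal B)|\leq n-m_{\mathcal A}$ together with Lemma~\ref{appendixproposition} to rule out the unwanted outcomes. Fix $j$ and $x\in A_j\setminus\{a_j\}$, and set $E=\{a_1,\dots,a_{j-1},x\}$. For a set $R\subseteq[n]\setminus(A_1\cup\dots\cup A_j)$ I consider
$$T_R=[n]\setminus(E\cup R).$$
By Lemma~\ref{niceproperty1}, one of three things holds:
\begin{enumerate}
\item[(i)] some $A\in\mathcal A$ satisfies $A\subseteq T_R$;
\item[(ii)] $T_R\subsetneq A$ for some $A\in\mathcal A$;
\item[(iii)] $T_R\subseteq B$ for some $B\in\mathcal B$.
\end{enumerate}

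First I dispose of (i) and (ii). For (i), I use the nested structure. Since $a_1\notin A$ and $f(y,\mathcal A_1)=f(a_1,\mathcal A_1)$ for $y\in A_1$, the set $A$ misses all of $A_1$, so $A\in\mathcal A_2$. Iterating with Lemma~\ref{Proposition on property of elements of mathcal A_j }, and using at step $j$ that $x\notin A$ and $f(x,\mathcal A_j)=f(a_j,\mathcal A_j)$, gives $A\in\mathcal A_{j+1}$. Hence $A$ is disjoint from $A_1\cup\dots\cup A_j$, so $A\subseteq [n]\setminus(A_1\cup\dots\cup A_j\cup R)$.

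Now restrict to $R$ of a fixed size $h$, chosen so that $|T_R|\geq m_{\mathcal A}$; this makes (ii) impossible. If (i) held for every such $R$, then the nonempty sets of $\mathcal A_{j+1}$, viewed inside the ground set $[n]\setminus(A_1\cup\dots\cup A_j)$, would meet the covering hypothesis of Lemma~\ref{appendixproposition}. That would force $|\mathcal A|$ to exceed $n-m_{\mathcal A}$, contradicting the assumption once $h$ is tuned. I expect some bookkeeping here to make $h$, $j$ and $m_{\mathcal A}$ fit. Possibly I will count $G(\mathcal B)$ obstacles together with $\mathcal A$ obstacles, since both lie in the family bounded by the hypothesis. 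In any case, for some $R$ outcome (iii) holds.

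So take $B\in\mathcal B$ with $T_R\subseteq B$, and a diamond $B,P,Q,S$ with $B$ minimal and $P,Q$ the two middle elements, so $P,Q\in G(B)\subseteq G(\mathcal B)$. Both contain $B\supseteq T_R$, and $T_R$ contains $A_j\setminus\{x\}$ and $A_l\setminus\{a_l\}$ for all $l\leq j-1$. The remaining point is to get $x\notin B_x$. Assuming $x\notin B$, Lemma~\ref{sizeB} gives $X\in\mathcal F$ with $X\subseteq B\cup\{x\}$ and $x\in X$. Suppose both $P$ and $Q$ contained $x$. Then $X\subseteq P\cap Q$. If $X\neq P,Q$, then $X,P,Q,S$ is a diamond in $\mathcal F$, which is impossible. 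If, say, $X=P$, then $P\subseteq B\cup\{x\}\subseteq Q$, contradicting the incomparability of $P$ and $Q$. Hence one of them, call it $B_x$, misses $x$, so $B_x\cap A_j=A_j\setminus\{x\}$. The other one is $C_x$: it is distinct from $B_x$ and contains $B\supseteq A_j\setminus\{x\}$ together with each $A_l\setminus\{a_l\}$.

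The main obstacle I foresee is guaranteeing $x\notin B$, that is, excluding the case where every $\mathcal B$-set above $T_R$ also contains $x$. My first attempt is to enlarge the counting. For each $R$, either some $\mathcal A$-obstacle exists, or some $B$ containing $T_R\cup\{x\}$ exists. In the latter case $B\cap A_j=A_j$, and I would try to push that configuration into a contradiction with the minimality of $f(a_j,\mathcal A_j)$, or else absorb those cases into the Lemma~\ref{appendixproposition} count by also removing $x$ from $R$'s complement. Failing that, I would choose $T_R$ maximal with respect to the property of avoiding $\mathcal A$, so that $T_R\cup\{x\}$ must contain an element of $\mathcal A$; this would prevent $B\supseteq T_R\cup\{x\}$, because $B$ is not contained in any set of $\mathcal A$ and, lying in $\mathcal B_0$, cannot contain one either.
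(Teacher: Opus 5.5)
There is a genuine gap, exactly where you suspect it: nothing in your construction forces $x\notin B$. Your test set $T_R$ omits $x$, but the set $B\in\mathcal B$ supplied by Lemma~\ref{niceproperty1} may contain $x$. Then $x\in B\subseteq P\cap Q$, so neither middle element of the diamond meets $A_j$ in $A_j\setminus\{x\}$, and you get no $B_x$.

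None of your remedies is carried out, and the last one fails as stated. Taking $T_R$ maximal among $\mathcal A$-free sets of your form only says that putting back an element of $R$ creates a member of $\mathcal A$; it says nothing about $x$. What you need is that $T_R\cup\{x\}$ contains a member of $\mathcal A$; then $B\supseteq T_R\cup\{x\}$ is impossible, since $\mathcal A\cup\mathcal B$ is an antichain. Such a member would avoid $a_1,\dots,a_{j-1}$ and contain $x$, hence lie in $f(x,\mathcal A_j)=f(a_j,\mathcal A_j)$.

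So the missing idea is the paper's. Fix $S\in f(a_j,\mathcal A_j)$; thus $A_j\subseteq S$ and $S\cap(A_1\cup\dots\cup A_{j-1})=\emptyset$. Only allow $R\subseteq V:=[n]\setminus(S\cup A_1\cup\dots\cup A_{j-1})$. Then $S\setminus\{x\}\subseteq T_R\subseteq B$, while $S\not\subseteq B$ by Lemma~\ref{niceproperty1}, so $x\notin B$. With $R=V\setminus Y$, your $T_R$ is exactly the paper's $(S^*\setminus\{x\})\cup Y$.

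The counting, which you left as bookkeeping, must be adapted to this restriction, and it does go through. If $A\in\mathcal A$ lies in $T_R$, your argument still puts $A$ in $\mathcal A_{j+1}$. Hence $A\neq S$, and $A\setminus S$ is a nonempty subset of $V\setminus R$. Note that $n-j-m_{\mathcal A}\geq 0$, as $j\leq k\leq|\mathcal A|\leq n-m_{\mathcal A}$.
\begin{itemize}
\item If $|V|>n-j-m_{\mathcal A}$, take $|R|=n-j-m_{\mathcal A}$. Then $|T_R|=m_{\mathcal A}$ excludes option (ii). Lemma~\ref{appendixproposition} on the ground set $V$ gives $|\mathcal A_{j+1}|\geq n-j-m_{\mathcal A}+1$. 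Adding the $j$ distinct sets $X_i\in f(a_i,\mathcal A_i)$, $i\leq j$, which lie in $\mathcal A\setminus\mathcal A_{j+1}$, contradicts the hypothesis.
\item If $|V|\leq n-j-m_{\mathcal A}$, take $R=V$. Option (ii) is excluded by size, and option (i) would give $\emptyset\neq A\subsetneq S$.
\end{itemize}

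Repaired this way, your route is a bit leaner than the paper's. Your test set has size exactly $m_{\mathcal A}$, whereas the paper's has size $m_{\mathcal A}+\sum_{l<j}(|A_l|-1)$. So you need neither Claim~1 nor the induction on $j$, which the paper uses to pay for those extra elements via the sets $B_y$, $y\in A_l\setminus\{a_l\}$, $l<j$. Finally, once $x\notin B$ is known, you can replace the appeal to Lemma~\ref{sizeB} by noting that $B=P\cap Q$ by the maximality of $B$ in $\mathcal B_0$.
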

Before we dive into the proof, the plan is as follows: choose a certain set $S$ of $f(a_j,\mathcal A_j)$ and remove $x$. Ideally, we would want to look at $S\setminus\{x\}$ and use the fact that $\mathcal A\cup\mathcal B$ is $C_2$-saturated to obtain that it is a subset of a set $B\in\mathcal B$. We then use the diamond $B$ comes with to generate the 2 incomparable sets which will be our $B_x$ and $C_x$. However, this exact approach may not always work, hence we add to $S\setminus\{x\}$ a suitable set $\overline{Y}$ to enforce this.
\begin{proof} The proof is by induction on $j$. We first perform the induction step, as the base case is highly similar. Suppose that $j>1$ and that the statement holds for $1,2,\dots,j-1$. We may assume that $A_j\setminus \{a_j\}\neq\emptyset$.

By construction, $f(a_j,\mathcal A_j)$ is not empty, so let $S$ be a set of $ f(a_j,\mathcal A_j)$ of maximal cardinality. We note that, by construction, $A_j\subseteq S$. Indeed, if $y\in A_j$, then $S\in f(a_j,\mathcal A_j)=f(y,\mathcal A_j)$, and so $y\in S$.

\begin{claim888}$|\mathcal A\cup G(\mathcal B)|\geq \left| A_1\cup A_2\cup\dots\cup A_{j-1}\right|+1$.
\end{claim888}
\begin{proof} 
The proof of this claim uses the sets in $\mathcal A$ generated by Proposition~\ref{Result on the elements of mathcal A}, and the inductive hypothesis.
By Proposition~\ref{Result on the elements of mathcal A}, there exists an $X_i\in \mathcal A$ such that $X_i\cap \left(A_1\cup A_2\cup\dots\cup A_i\right)= A_i$, for all $i\in[j]$. Let $\mathcal F_{\mathcal A} = \{X_i : 1\leq i\leq j\}\subseteq\mathcal A$. If $i<i'$, then $X_{i'}$ has an empty intersection with $A_1\cup\dots \cup A_i$, and so $X_i\neq X_{i'}$. Therefore, we have that $|\mathcal F_{\mathcal A}|=j$. 

Moreover, by our inductive hypothesis, for every $1\leq t\leq j-1$ for which $|A_t|>1$, and $y\in A_t\setminus\{a_t\}$, there exists a set $B_y\in G(\mathcal B)$ such that $B_y \cap A_t = A_t\setminus \{y\}$, and $A_l\setminus\{a_l\}\subseteq B_y$ for all $1\leq l \leq t-1$. This gives a set $B_y$ for each $y\in \bigcup_{i=1}^{j-1} \left(A_i\setminus \{a_i\}\right)$. 

Suppose that $B_y = B_z$ for some $y,z \in \bigcup_{i=1}^{j-1} \left(A_i\setminus\{a_i\}\right)$. Then $y\in A_i\setminus \{a_i\}$ and $z\in A_{i'}\setminus\{a_{i'}\}$ for some $i,i'\in[j-1]$. Without loss of generality, we may assume that $i\leq i'$. If $i<i'$, then $A_i\setminus \{a_{i}\}\subseteq B_z$, which implies that $y\in B_z=B_y$, a contradiction. Therefore $i=i'$. Thus $A_i \setminus\{y\} = B_y \cap A_{i} = B_z\cap A_i = A_i \setminus\{z\}$, which implies that $y=z$. Therefore, the sets $B_y$ are pairwise different. Let $\mathcal F_{\mathcal B}= \{B_y : y\in\bigcup_{i=1}^{j-1} \left(A_i\setminus \{a_i\}\right)\}\subseteq G(\mathcal B)$. The above tells us that $|\mathcal F_{\mathcal B}|= |\bigcup_{i=1}^{j-1} (A_i\setminus \{a_i\})|=|\bigcup_{i=1}^{j-1} A_i | - j +1$.

By Lemma~\ref{G(B) and A are disjoint} we have that $G(\mathcal B)$ and $\mathcal A$ are disjoint, and thus $\mathcal F_{\mathcal B}$ and $\mathcal F _{\mathcal A}$ are also disjoint. Furthermore, $|\mathcal A\cup \G(\mathcal B)|\geq|\mathcal F_{\mathcal A}|+ |\mathcal F_{\mathcal B}|=j+|\bigcup_{i=1}^{j-1} A_i | - j +1$. Therefore, we have that $|\mathcal A\cup G(\mathcal B)|\geq\left| A_1\cup A_2\cup\dots\cup A_{j-1}\right|+1$, as claimed. 
\end{proof}
We are now ready to move to the main part of the induction, so let $x\in A_j\setminus\{a_j\}$. We define $S^* = S\cup \left( \bigcup_{i=1}^{j-1} (A_{i}\setminus\{a_j\})\right)$. 

We note that $|[n]\setminus\left( S\cup A_1\cup A_2\cup\dots\cup A_{j-1}\right) |\geq n - |S| - |A_1\cup A_2\cup\dots\cup A_{j-1}| $, which by the above claim is greater than or equal to $n+1 - |S| - |\mathcal A\cup G(\mathcal B)|$, which is at least $m_{\mathcal A}-|S|+1$ by the initial assumption. For simplicity, we denote by $\mathcal T$ the family of sets $\binom{[n]\setminus \left( S\cup A_1\cup A_2\cup\dots\cup A_{j-1}\right)}{m_A - |S|+1}$.
 
Let $Y\in\mathcal T$, and consider $(S^{^*}\setminus\{x\})\cup Y$. By Lemma~\ref{niceproperty1}, there exists a set $A\in \mathcal A$ such that $A\subseteq (S^*\setminus\{x\})\cup Y$, or a set $A'\in \mathcal A$ such that $(S^*\setminus\{x\})\cup Y \subsetneq A'$, or a set $B\in \mathcal B$ such that $(S^*\setminus\{x\})\cup Y\subseteq B$.

\begin{claim888} There exists a set $Y\in\mathcal T$ such that $A\not\subseteq (S^*\setminus \{x\})\cup Y$ for all $A\in\mathcal A$.
\end{claim888}
\begin{proof}
Suppose that for all $Y\in\mathcal T$, there exists a set $A\in \mathcal A$ such that $A\subseteq (S^*\setminus\{x\})\cup Y$. 

First we show that this implies that $A\in \mathcal A_l$ for all $1\leq l\leq j$. We prove this by induction on $l$. The base case $l= 1$ is trivial by construction as $A\in\mathcal A=\mathcal A_1$. Suppose now that $2\leq l\leq j$ and $A\in \mathcal A_{l-1}$. We observe that $S^*=S\cup \left( \bigcup_{i=1}^{j-1} (A_{i}\setminus\{a_i\})\right)$, and since $a_{l-1}\notin S$ as $S\in\mathcal A_j$, $a_{l-1}\not \in S^*$. Furthermore, by construction, $a_{l-1}\notin Y$. We therefore have that $a_{l-1}\not\in A$, which together with the fact that $A\in\mathcal A_{l-1}$, implies that $A\in \mathcal A_{l}$, completing the inductive step.

Therefore we have that $A\in\mathcal A_j$. Combining this with Lemma~\ref{Proposition on property of elements of mathcal A_j }, we have that $A\cap \left(\bigcup_{i=1}^{j-1} A_i\right) = \emptyset$. As such, since $A\subseteq (S^*\setminus\{x\} )\cup Y$, we must in fact have $A\subseteq (S\setminus\{x\})\cup Y$. Thus $A\setminus (S\setminus\{x\})\subseteq Y$. Furthermore, $A\setminus (S\setminus\{x\})$ is not empty, as otherwise $A$ would be a subset of $S\backslash \{x\}$, hence a strict subset of $S\in\mathcal A$, contradicting the fact that $\mathcal A$ is an antichain. 

Thus, for every $Y\in \mathcal T$, there exists a set $A_Y\in \mathcal A_j\subseteq\mathcal A$ such that $ A_Y\setminus (S\setminus \{x\}) \subseteq Y$ and $A_Y\setminus (S\setminus \{x\})\neq \emptyset$. 

We have that $\left|\left\{A_Y : Y \in \mathcal T\right\}\right|\geq \left|\left\{A_Y\setminus(S\setminus\{x\}) : Y \in \mathcal T\right\}\right|$. By Lemma~\ref{appendixproposition}, we also have that $$\left|\left\{A_Y\setminus(S\setminus\{x\}) : Y \in \mathcal T\right\}\right|\geq\left|[n]\setminus\left( S\cup A_1\cup\dots\cup A_{j-1}\right)\right|- m_{\mathcal A} +|S|,$$ which is at least $n - |\bigcup_{i=1}^{j-1} A_i| -  m_{\mathcal A}$. If we denote by $\mathcal F'_{\mathcal A}$ the set $\left\{A_Y : Y \in \mathcal T\right\}$, we have that $\mathcal F'_{\mathcal A}\subseteq\mathcal A_j\subseteq\mathcal A$ and $|\mathcal F'_{\mathcal A}|\geq n - |\bigcup_{i=1}^{j-1} A_i| -  m_{\mathcal A}$.

We recall that in the proof of Claim 1, we exhibited two families $\mathcal F_{\mathcal A}\subseteq \mathcal A$ and $\mathcal F_{\mathcal B} \subseteq G(\mathcal B)$ whose union has size $|\bigcup_{i=1}^{j-1} A_i|+1$. We will show that these families are disjoint from the family $\mathcal F'_{\mathcal A}$ constructed above. This will then give at least $n+1-m_{\mathcal A}$ elements of $\mathcal A\cup G(\mathcal B)$, which will contradict the initial assumption.

Firstly, since $\mathcal F'_{\mathcal A}\subseteq\mathcal A$, $\mathcal F_{\mathcal B}\subseteq G(\mathcal B)$, and $\mathcal A$ and $G(\mathcal B)$ are disjoint, we have that $\mathcal F'_{\mathcal A}\cap\mathcal F_{\mathcal B}=\emptyset$.

Next, suppose that there exists $P\in\mathcal F_{\mathcal A}\cap\mathcal F'_{\mathcal A}$. Since $P\in\mathcal F_{\mathcal A}$, there exists $i\in[j]$ such that $P\cap (\bigcup_{l=1}^{i}  A_l) =A_i$. Since also $P\in \mathcal A_{j}$, by Lemma~\ref{Proposition on property of elements of mathcal A_j }, we must therefore have $i=j$. 

However, $P\subseteq (S\setminus\{x\} )\cup Y$ for some $Y\in \mathcal T$, which implies that $x\not\in P$. Thus, since $x\in A_j$, which implies that $f(a_j,\mathcal A_j) = f(x,\mathcal A_j)$, we have that $a_j\not\in P$. Therefore $P\in \mathcal A_j\setminus f(a_j,\mathcal A_j) = \mathcal A_{j+1}$. Combining this with Lemma~\ref{Proposition on property of elements of mathcal A_j }, we get that $P\cap \left(\bigcup_{l=1}^{j} A_i\right) = \emptyset$, a contradiction.
\end{proof}

The above guarantees the existence of a set $\overline{Y}\in \mathcal T$ for which there exists a set $A\in \mathcal A$ such that $(S^*\setminus\{x\})\cup\overline{ Y} \subsetneq A$, or there exists a set $B\in \mathcal B$ such that $(S^*\setminus\{x\})\cup\overline{ Y} \subseteq B$.
\begin{claim888}
Let $Y\in \mathcal T$, $A\in \mathcal A$. Then $(S^*\setminus\{x\})\cup Y$ is not a strict subset of $A$.
\end{claim888}
\begin{proof}
Suppose that $(S^*\setminus\{x\})\cup Y \subsetneq A$. Then $|A|> |(S^*\setminus\{x\})\cup Y |\geq |(S\setminus\{x\})\cup Y|$. Since $Y$ and $S$ are disjoint, this is equivalent to $|A|> |S|-1+|Y| = |S|-1+m_{\mathcal A}-|S|+1=m_{\mathcal A}$, a contradiction.
\end{proof}
Therefore, there exists $B\in\mathcal B$ such that $(S^*\setminus\{x\})\cup\overline{Y} \subseteq B$, which implies that $S\setminus\{x\}\subseteq B$. Since $S\in\mathcal A$, Lemma~\ref{niceproperty1} tells us that $S\not\subseteq B$ , thus $x\not\in B$. Recall that $A_j\subseteq S$, so $A_j\setminus \{x\}\subseteq B$. This means that $B\cap A_j = A_j\setminus\{x\}$. Furthermore, since $S^*\setminus\{x\}\subseteq B$, we have that $A_l\setminus \{a_l\}\subseteq B$ for all $1\leq l\leq j-1$. 

By definition, since $B\in\mathcal B$, there exist sets $C,D,E\in \mathcal F$ such that $B, C, D, E$ forms a diamond with $B$ as its minimal element, as illustrated below.
\begin{figure}[hbt!]
\hspace{0.1cm}\\
\centering
\begin{tikzpicture}
\node[label=above:$E$] (top) at (4,1) {$\bullet$}; 
\node[label=left:$C$] (left) at (3,0) {$\bullet$};
\node[label=right:$D$] (right) at (5,0) {$\bullet$};
\node[label=below:$B$] (bottom) at (4,-1) {$\bullet$};
\draw (top) -- (left) -- (bottom) -- (right) -- (top);
\end{tikzpicture}
\end{figure}
\FloatBarrier
By definition, $C$ and $D$ are elements of $G(\mathcal B)$ and, furthermore, $B= C\cap D$, by the maximality of $B$. Combining this with the previous observations we have that $C\cap D\cap A_j = A_j\setminus \{x\}$ and $A_l\setminus \{a_l\}\subseteq C\cap D$ for all $1\leq l\leq j-1$. The first equation implies that one of $C$ or $D$ does not contain $x$, hence its intersection with $A_j$ must be $A_j\setminus\{x\}$. Without loss of generality, we may assume that $C\cap A_j = A_j\setminus \{x\}$. Then $C$ satisfies all the required conditions stated in the first part of the proposition. 

Additionally, $D$ satisfies $A_j\setminus\{x\}\subseteq D$ and $A_l\setminus\{a_l\}\subseteq D$ for all $1\leq l\leq j-1$. This completes the induction step, and hence the proof.
\end{proof}
Therefore, if $\mathcal A\cup G(\mathcal B)$ has size at most $n-m_{\mathcal A}$, then for every $x\in \bigcup_{l=1}^k (A_l\setminus \{a_l\})$ Proposition~\ref{Main Lemma of second section with no assumptions on W} produces a distinct element $B_x\in G(\mathcal B)$, giving that $|G(\mathcal B)|\geq \sum_{l=1}^k |A_k| - k$, which is equal to $n-|W|$ by Lemma~\ref{proposition on union of A_i equals [n] backslash W}. Therefore, coupled with Proposition~\ref{Result on the elements of mathcal A}, if $|\mathcal A\cup G(\mathcal B)|\leq n-m_{\mathcal A}$, then $|\mathcal A\cup G(\mathcal B)|\geq n-|W|$. This implies that, in general, $|A\cup G(\cB)|\geq n - m_{\mathcal A}- |W|$. However, this can be further improved to $|\mathcal A\cup G(\mathcal B)|\geq n +1 - m_{\mathcal A} - |W|$, which we do in the following corollary.

\begin{corollary}\label{Corollary bounding size of A cup G(B) when W empty}
We have that $|\mathcal A\cup G(\mathcal B)|\geq n+1-m_{\mathcal A}-|W|$. By symmetry, we also have that $|\mathcal X\cup H(\mathcal Y)|\geq n+1-\max_{X\in\mathcal X}(n-|X|)- |\overline{W}|$.
\end{corollary}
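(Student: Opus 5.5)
We argue by contradiction. Suppose that $|\mathcal A\cup G(\mathcal B)|\leq n-m_{\mathcal A}-|W|$. In particular $|\mathcal A\cup G(\mathcal B)|\leq n-m_{\mathcal A}$, so the hypothesis of Proposition~\ref{Main Lemma of second section with no assumptions on W} holds. Its conclusion is then available for every $j\in[k]$ with $|A_j|>1$ and every $x\in A_j\setminus\{a_j\}$.

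The first step is to collect the disjoint families produced so far. Proposition~\ref{Result on the elements of mathcal A} gives $k$ distinct sets $X_1,\dots,X_k\in\mathcal A$; they are distinct because of the disjointness pattern $X_i\cap(A_1\cup\dots\cup A_i)=A_i$. Proposition~\ref{Main Lemma of second section with no assumptions on W} gives sets $B_x\in G(\mathcal B)$, one for each $x\in\bigcup_{l}(A_l\setminus\{a_l\})$. These are pairwise distinct by exactly the argument of Claim 1. Since $\mathcal A\cap G(\mathcal B)=\emptyset$ (Lemma~\ref{G(B) and A are disjoint}), Lemma~\ref{proposition on union of A_i equals [n] backslash W} yields
$$|\mathcal A\cup G(\mathcal B)|\geq k+\sum_{l=1}^k(|A_l|-1)=n-|W|.$$
This already contradicts the assumption whenever $m_{\mathcal A}\geq 1$, because then $n-|W|>n-m_{\mathcal A}-|W|$. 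As $\emptyset\notin\mathcal F$, every set of $\mathcal A$ is nonempty, so $m_{\mathcal A}\geq1$ and we are done.

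On reflection, this shows the stated inequality holds with room to spare: either $|\mathcal A\cup G(\mathcal B)|>n-m_{\mathcal A}$, which gives at least $n+1-m_{\mathcal A}\geq n+1-m_{\mathcal A}-|W|$, or the proposition applies and gives at least $n-|W|\geq n+1-m_{\mathcal A}-|W|$ since $m_{\mathcal A}\geq1$. So the corollary reduces to this dichotomy together with the observation $m_{\mathcal A}\geq 1$.

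The only genuinely delicate point is the distinctness of the $B_x$ across different indices $l$, which relies on the property $A_l\setminus\{a_l\}\subseteq B_z$ for $l$ below the index of $z$. This is verbatim the argument in Claim 1, now run over all $l\leq k$ instead of $l\leq j-1$. Finally, the second statement follows by applying the first to the diamond-saturated family $\{[n]\setminus F:F\in\mathcal F\}$. Under complementation, minimal sets become complements of maximal sets, $m$ becomes $\max_{X\in\mathcal X}(n-|X|)$, and $W$ becomes $\overline W$.
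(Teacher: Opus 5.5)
Your proof is correct, and it is shorter than the paper's. Both arguments start from the same dichotomy. Either $|\mathcal A\cup G(\mathcal B)|\geq n+1-m_{\mathcal A}$, or Proposition~\ref{Main Lemma of second section with no assumptions on W} applies and, together with Proposition~\ref{Result on the elements of mathcal A}, yields $n-|W|$ sets.

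The paper then does extra work in the second branch to find one further set of $G(\mathcal B)$. When every $|A_l|=1$ it shows $\mathcal B\neq\emptyset$. Otherwise it shows that the second set $C_x$, for $x$ in the last $A_l$ of size at least $2$, differs from every $B_y$. This makes that branch give $n+1-|W|$.

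You observe that this extra set is not needed for the inequality as stated. Since $\emptyset\notin\mathcal F$ forces $m_{\mathcal A}\geq 1$, the bound $n-|W|\geq n+1-m_{\mathcal A}-|W|$ already holds. So the paper's remark that the dichotomy alone only gives $n-m_{\mathcal A}-|W|$ undersells it.

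What the paper's route buys is the sharper alternative $n+1-|W|$. More importantly, its $C_x$ argument is reused in Corollary~\ref{corollary n+1-W}, where there is no $m_{\mathcal A}$ term to absorb the missing $1$. For the present corollary, and for its use in Case 1 of the proof of Theorem~\ref{maintheorem}, your version suffices.
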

\begin{proof}
If $|\mathcal A\cup G(\mathcal B)|\geq n+1-m_{\mathcal A}$ we are done. Thus we may assume that $|\mathcal A\cup G(\mathcal B)|\leq n - m_{\mathcal A}$. Hence, by Proposition~\ref{Main Lemma of second section with no assumptions on W} we obtain $\sum_{i=1}^k (|A_i|-1)$ elements in $G(\mathcal B)$. We will prove that in fact $|G(\mathcal B)|\geq \sum_{i=1}^k (|A_i|-1) +1$. 

\begin{case1} $|A_l| = 1$ for all $1\leq l \leq k$.
\end{case1}
In this case, it is enough to prove that $\mathcal B$ is not empty. Suppose that $\mathcal B = \emptyset$. 

Let $S$ be a maximum cardinality element of $\mathcal A$ and let $i\in S$. Consider the sets $(S\setminus\{i\})\cup\{j\}$ for all $j\in[n]\setminus S$. By Lemma~\ref{niceproperty1}, for every $j\in [n]\setminus S$, there exists an element  $Y_j\in \mathcal A$ such that $A_j$ and $(S\setminus\{i\})\cup \{j\}$ are comparable. By the maximality of $|S|$, we must have $Y_j\subseteq (S\setminus \{i\})\cup\{j\}$. Since $\mathcal A$ is an antichain, we have that $A_j\not\subseteq S$, which means that $j\in Y_j$. Thus $Y_j\setminus S = \{j\}$. This also means that $Y_j\neq Y_{j'}$ if $j\neq j'$. Therefore $\{Y_j : j\in [n]\setminus S\} \cup \{S\}$ is a subset of $\mathcal A$ of size $n + 1 - |S|$. Thus, $|\mathcal A\cup G(\mathcal B)|\geq n+1 -|S|=n+1-m_{\mathcal A}$, a contradiction. Therefore, $\mathcal B\neq \emptyset$, so $|G(\mathcal B)|\geq1= \sum_{i=1}^k (|A_i|-1) +1$.

\begin{case1} $|A_l|\neq 1$ for some $l\in \{1,2,\dots,k\}$.
\end{case1}
Let $l\in [k]$ be the largest value such that $|A_l|\geq 2$, and let $x\in A_l\setminus\{a_l\}$. We will prove that the second set guaranteed by Proposition~\ref{Main Lemma of second section with no assumptions on W}, $C_x$, is distinct from all $B_y$ for all $y\in \bigcup_{i=1}^k (A_i \setminus\{a_i\})$. 

Suppose that  $C_x = B_y$ for some $y\in  \bigcup_{i=1}^k (A_i \setminus \{a_i\})$. Then $y\in A_i \setminus\{a_i\}$ for some $i\leq l$, as $A_{i'}\setminus\{a_{i'}\} = \emptyset $ for all $i' > l$. 

If $i< l$ then by Proposition~\ref{Main Lemma of second section with no assumptions on W} we have that $B_y\cap A_i=A_i\setminus\{y\}$, and $A_i\setminus\{a_i\}\subseteq C_x\cap A_i$. This implies that $A_i\setminus\{a_i\}\subseteq A_i\setminus\{y\}$, a contradiction, hence $i = l$. Therefore $ A_l\setminus \{y\} = B_y \cap A_l = C_x\cap A_l$. However, since $A_l\setminus \{x\}\subseteq C_x$, we get that $A_l\setminus\{x\}\subseteq A_l\setminus\{y\}$, so $y = x$. This is a contradiction, as $B_x\neq C_x$. Thus, by taking $C_x$ along with $\{B_y : y\in \bigcup_{j=1}^k (A_j\setminus\{a_j\})\}$, we obtain $\sum_{i=1}^k (|A_i|-1) +1$ elements in $G(\mathcal B)$.

Therefore, in both cases, we get that $|G(\mathcal B)|\geq \sum_{i=1}^k (|A_i|-1) +1$. By Proposition~\ref{Result on the elements of mathcal A}, $|\mathcal A|\geq k$. Combining these two inequalities gives $|\mathcal A\cup G(\mathcal B)|\geq\left|\bigcup_{i=1}^kA_k\right|+1=n+1-|W|$, which completes the proof.
\end{proof}
It turns out that when $W\neq \emptyset$, Corollary~\ref{Corollary bounding size of A cup G(B) when W empty} can be strengthened to $|\mathcal A\cup G(\mathcal B)|\geq n+1  - |W|$. The proof is similar to the proof of Proposition~\ref{Main Lemma of second section with no assumptions on W}.
\begin{proposition}\label{Lemma bounding A cup G(B) assuming W not empty} Suppose that $W\neq \emptyset$. Then, for all $1\leq j\leq k$ for which $|A_j|>1$, and all $x\in A_j\setminus\{a_j\}$, there exists $B\in  G(\mathcal B)$ such that $B \cap A_j = A_j\setminus\{x\}$, $A_l\setminus \{a_l\}\subseteq B$ for all $1\leq l \leq j-1$, and $W\subseteq B$. Furthermore, there exists $C\in G(\mathcal B)$, $C\neq B$, such that $W\subseteq C$, $A_j\setminus\{x\}\subseteq C$, and $A_l\setminus\{a_l\}\subseteq C$ for all $1\leq l\leq j-1$.
\end{proposition}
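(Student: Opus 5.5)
Following the proof of Proposition~\ref{Main Lemma of second section with no assumptions on W}, we fix $j$ with $|A_j|>1$ and $x\in A_j\setminus\{a_j\}$, and let $S$ be a set of $f(a_j,\mathcal A_j)$; this time we choose $S$ of \emph{maximal} cardinality only if convenient, since we will no longer need the $m_{\mathcal A}$ padding. Set
\[
S^*=\Bigl(S\cup\bigcup_{l=1}^{j-1}(A_l\setminus\{a_l\})\Bigr)\cup W ,
\]
and consider the set $T=S^*\setminus\{x\}$. The role previously played by the padding set $\overline Y\in\mathcal T$ is now played by $W$. The key difference is that $W\neq\emptyset$ makes $T$ automatically avoid the first two alternatives of Lemma~\ref{niceproperty1}, so no size hypothesis on $\mathcal A\cup G(\mathcal B)$ and no induction through Claim~1 should be needed.

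The first step is to rule out that $T$ contains some $A\in\mathcal A$. We argue exactly as in Claim~2. By induction on $l\le j$ we show $A\in\mathcal A_l$: since $a_{l-1}\notin S$ (as $S\in\mathcal A_j$), $a_{l-1}\notin W$, and $a_{l-1}$ was removed, we get $a_{l-1}\notin A$. Lemma~\ref{Proposition on property of elements of mathcal A_j } then gives $A\cap\bigcup_{l<j}A_l=\emptyset$. Also $A\cap W=\emptyset$ by definition of $W$. Hence $A\subseteq S\setminus\{x\}\subsetneq S$, contradicting that $\mathcal A$ is an antichain.

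The second step is to rule out $T\subsetneq A'$ for some $A'\in\mathcal A$. This is immediate, since $W\subseteq T$ and $W$ is disjoint from every set of $\mathcal A$, while $W\ne\emptyset$.

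Hence Lemma~\ref{niceproperty1} yields $B_0\in\mathcal B$ with $T\subseteq B_0$. Since $S\setminus\{x\}\subseteq B_0$ but $S\not\subseteq B_0$ (Lemma~\ref{niceproperty1}), we get $x\notin B_0$, so $B_0\cap A_j=A_j\setminus\{x\}$. Moreover $W\subseteq B_0$ and $A_l\setminus\{a_l\}\subseteq B_0$ for $l<j$. Take the diamond $B_0,C,D,E$ witnessing $B_0\in\mathcal B$. Then $C,D\in G(\mathcal B)$, $C\ne D$, and $C\cap D=B_0$ by maximality of $B_0$ in $\mathcal B_0$ (exactly as in the earlier proof, since $C\cap D$ is itself below the $\Lambda$ formed by $C,D,E$). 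So one of them, say $C$, misses $x$; this gives the required $B:=C$. The other, $D$, contains $B_0$ and hence $W$, $A_j\setminus\{x\}$ and each $A_l\setminus\{a_l\}$, so it serves as the required second set $C$.

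The main point to check carefully is the first step, that $T$ contains no set of $\mathcal A$. It is where the nesting $\mathcal A_1\supsetneq\dots\supsetneq\mathcal A_j$ and the choice to delete exactly the $a_l$ matter. I expect it to go through verbatim from Claim~2 once one observes that $a_l\notin W$. Notably, no induction on $j$ is required here, unlike in Proposition~\ref{Main Lemma of second section with no assumptions on W}.
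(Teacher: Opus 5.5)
Your proposal is correct and is essentially the paper's own proof. Like the paper, you pad $S\setminus\{x\}$ (together with the sets $A_l\setminus\{a_l\}$, $l<j$) by $W$ in place of $\overline{Y}$, and you rule out the two $\mathcal A$-alternatives of Lemma~\ref{niceproperty1} in the same way: the Claim~2 argument plus $A\cap W=\emptyset$, and $W\not\subseteq A'$. You then take the two middle sets of the diamond over $B_0=C\cap D$, and, as in the paper, no induction on $j$ and no size hypothesis on $\mathcal A\cup G(\mathcal B)$ are needed.
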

\begin{proof}
Let $1\leq j\leq k$ be such that $A_j\setminus\{a_j\}\neq\emptyset$, and $x\in A_j\backslash \{a_j\}$.

By construction, $f(a_j,\mathcal A_j)$ is not empty. Let $S\in f(a_j,\mathcal A_j)$, which implies that $x\in A_j\subseteq S$. 

We now define $S^* = S\cup \left( \bigcup_{l=1}^{j-1} (A_{l}\setminus \{a_l\})\right)$. By~Lemma \ref{niceproperty1} there exists $A\in \mathcal A$ such that $A\subseteq (S^*\setminus\{x\})\cup W$, or there exists  $A'\in \mathcal A$ such that $(S^*\setminus \{x\})\cup W \subsetneq A'$, or there exists $B'\in \mathcal B$ such that $(S^*\setminus\{x\})\cup W \subseteq B'$.

First, suppose that there exists $A\in \mathcal A$ such that $A\subseteq (S^*\setminus \{x\})\cup W$. The same way as in the first part of Claim 2 in the proof of Proposition~\ref{Main Lemma of second section with no assumptions on W}, we get that $A\in \mathcal A_j$, as the only thing that matters is that $a_i\notin A$ for all $i\in[j-1]$. Thus, by Lemma~\ref{Proposition on property of elements of mathcal A_j }, $A\cap \left(\bigcup_{i=1}^{j-1} A_i \right) = \emptyset$. Therefore, since $A\subseteq (S^*\setminus\{x\} )\cup W$, we in fact have that $A\subseteq (S\setminus \{x\})\cup W$. As by construction $A\cap W=\emptyset$ we have that $A\subseteq S\setminus\{x\}$. Hence $A$ is a proper subset of $S$, contradicting the fact that $\mathcal A$ is an antichain.

Furthermore, if $A'\in \mathcal A$ is such that $(S^*\setminus\{x\})\cup W \subsetneq A'$, then $W\subseteq A'$, a contradiction.

Therefore, there exists $B'\in\mathcal B$ such that $(S^*\setminus\{x\})\cup W \subseteq B'$. By Lemma~\ref{niceproperty1}, $B'$ and $S$ are incomparable. Since $S\setminus\{x\}\subseteq B'$, we have that $B'\cap A_j = A_j\setminus\{x\}$. Moreover, since $(S\setminus\{x\})\cup \left( \bigcup_{l=1}^{j-1} (A_{l}\setminus \{a_l\}\right)=S^*\setminus\{x\}\subseteq B'$, we have that $A_l\setminus\{a_l\}\subseteq B'$ for all $1\leq l\leq j-1$. 

By the definition of $\mathcal B$, there exist sets $B,C,D\in \mathcal F$ which, together with $B'$, form a diamond in which $B'$ is the minimal element, as depicted below.
\begin{figure}[hbt!]
\hspace{0.1cm}\\
\centering
\begin{tikzpicture}
\node[label=above:$D$] (top) at (4,1) {$\bullet$}; 
\node[label=left:$B$] (left) at (3,0) {$\bullet$};
\node[label=right:$C$] (right) at (5,0) {$\bullet$};
\node[label=below:$B'$] (bottom) at (4,-1) {$\bullet$};
\draw (top) -- (left) -- (bottom) -- (right) -- (top);
\end{tikzpicture}
\end{figure}
\FloatBarrier
By definition, $B$ and $C$ are elements of $G(\mathcal B)$. Furthermore, by the maximality of $B'$, we have that $B' = B\cap C$. This implies that $B\cap C\cap A_j = A_j\setminus \{x\}$, and $A_l\setminus\{a_l\}\subseteq B\cap C$ for all $1\leq l\leq j-1$. The first equation implies that $B\cap A_j = A_j\setminus\{x\}$, or $C\cap A_j =A_j\setminus \{x\}$. Without loss of generality, we may assume that $B\cap A_j = A_j\setminus\{x\}$. Trivially, since $W\subseteq B'$, we also have that $W\subseteq B$ and $W\subseteq C$. Then $B$ and $C$ satisfy all the required conditions, and hence the proof is complete.
\end{proof}
Following the blueprint from Corollary~\ref{Corollary bounding size of A cup G(B) when W empty}, we have the following.
\begin{corollary}\label{corollary n+1-W}
If $W\neq \emptyset$, then $|\mathcal A\cup G(\mathcal B)|\geq n +1 - |W|$. Moreover, we can insist that the sets in $G(\mathcal B)$ contain $W$ as a subset.
\end{corollary}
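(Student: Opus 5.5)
We follow the blueprint of Corollary~\ref{Corollary bounding size of A cup G(B) when W empty}, now using Proposition~\ref{Lemma bounding A cup G(B) assuming W not empty} instead of Proposition~\ref{Main Lemma of second section with no assumptions on W}. The advantage is that Proposition~\ref{Lemma bounding A cup G(B) assuming W not empty} carries no hypothesis of the form $|\mathcal A\cup G(\mathcal B)|\leq n-m_{\mathcal A}$. Therefore the whole count can be done unconditionally, with every set produced in $G(\mathcal B)$ containing $W$.

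\textbf{The sets in $\mathcal A$.} By Proposition~\ref{Result on the elements of mathcal A} there are $k$ distinct sets $X_1,\dots,X_k\in\mathcal A$.

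\textbf{The main family in $G(\mathcal B)$.} For every $x\in\bigcup_{l=1}^k (A_l\setminus\{a_l\})$, Proposition~\ref{Lemma bounding A cup G(B) assuming W not empty} gives a set $B_x\in G(\mathcal B)$ with
\begin{itemize}
\item $W\subseteq B_x$,
\item $B_x\cap A_j=A_j\setminus\{x\}$, where $x\in A_j$,
\item $A_l\setminus\{a_l\}\subseteq B_x$ for all $l<j$.
\end{itemize}
These sets are pairwise distinct, by exactly the argument in Claim 1. Suppose $B_y=B_z$ with $y\in A_i$, $z\in A_{i'}$ and $i<i'$. Then $y\in A_i\setminus\{a_i\}\subseteq B_z=B_y$, which is false. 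If $i=i'$, comparing the intersections with $A_i$ forces $y=z$. This gives $\sum_l(|A_l|-1)=n-|W|-k$ distinct sets, using Lemma~\ref{proposition on union of A_i equals [n] backslash W}. Since $\mathcal A$ and $G(\mathcal B)$ are disjoint by Lemma~\ref{G(B) and A are disjoint}, we already have $n-|W|$ sets. The remaining work is to find one extra set of $G(\mathcal B)$ that contains $W$.

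\textbf{Case $|A_l|\geq 2$ for some $l$.} Take the largest such $l$, pick $x\in A_l\setminus\{a_l\}$, and use the second set $C_x$ supplied by Proposition~\ref{Lemma bounding A cup G(B) assuming W not empty}. It contains $W$. Suppose $C_x=B_y$ with $y\in A_i\setminus\{a_i\}$; necessarily $i\leq l$, since $A_{i'}\setminus\{a_{i'}\}=\emptyset$ for $i'>l$.
\begin{itemize}
\item If $i<l$, then $A_i\setminus\{a_i\}\subseteq C_x\cap A_i=A_i\setminus\{y\}$, a contradiction.
\item If $i=l$, then $A_l\setminus\{x\}\subseteq C_x\cap A_l=A_l\setminus\{y\}$, so $y=x$, contradicting $C_x\neq B_x$.
\end{itemize}
This is verbatim Case 2 of the corollary above.

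\textbf{Case $|A_l|=1$ for all $l$ (the main obstacle).} Here we must exhibit a single set of $G(\mathcal B)$ containing $W$. We cannot rely on $\mathcal B\neq\emptyset$ as in Case 1 of the previous corollary, because that argument used the size assumption. Instead we use that $W\neq\emptyset$ directly, mimicking the proof of Proposition~\ref{Lemma bounding A cup G(B) assuming W not empty} without removing an element.

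Apply Lemma~\ref{niceproperty1} to the set $W$ itself, or more safely to $W\cup(\text{something avoiding all of }\mathcal A)$.
\begin{itemize}
\item No $A\in\mathcal A$ satisfies $A\subseteq W$, since $\mathcal A$ consists of nonempty sets disjoint from $W$; note $\emptyset\notin\mathcal F$.
\item No $A'\in\mathcal A$ satisfies $W\subsetneq A'$, since $A'\cap W=\emptyset$ and $W\neq\emptyset$.
\end{itemize}
Hence $W\subseteq B'$ for some $B'\in\mathcal B$. The diamond with minimal element $B'$ then supplies $P\in G(\mathcal B)$ with $W\subseteq B'\subseteq P$. Since the $B_x$ family is empty in this case, $P$ is the required extra set.

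The point to double-check is the trichotomy from Lemma~\ref{niceproperty1}. It says that any set is either above some $A\in\mathcal A$, strictly below some $A\in\mathcal A$, or contained in some $B\in\mathcal B$; we need it to apply to $W$ itself. Granting this, the total is $k+(n-|W|-k)+1=n+1-|W|$, with all the $G(\mathcal B)$-sets used containing $W$.
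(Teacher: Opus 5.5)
Your proposal is correct and follows the paper's proof. You count $k+\sum_l(|A_l|-1)$ sets plus one extra $W$-containing set of $G(\mathcal B)$, and you handle the case $|A_l|\ge 2$ for some $l$ exactly as in Case 2 of Corollary~\ref{Corollary bounding size of A cup G(B) when W empty}, using the companion set $C_x$ from Proposition~\ref{Lemma bounding A cup G(B) assuming W not empty}. The only cosmetic difference is the case where all $|A_l|=1$: you apply the trichotomy following Lemma~\ref{niceproperty1} directly to $W$, which is legitimate since it holds for every subset of $[n]$, whereas the paper shows $W\notin\mathcal F$ must be the minimal element of a diamond and takes a maximal such set containing $W$, which lies in $\mathcal B$; these are the same underlying idea.
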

\begin{proof}
As before, it is enough to generate $\sum_{i=1}^k(|A_i|-1)+1$ elements in $G(\mathcal B)$ that contain $W$ as a subset. We split the analysis into two cases depending on whether all $A_i$ have size 1 or not. If there exists $l\in[k]$ such that $|A_l|>1$, this is identical to the proof of Case 2 in Corollary~\ref{Corollary bounding size of A cup G(B) when W empty}.

Suppose now that $|A_i|=1$ for all $i\in[k]$. It is enough to show that $\mathcal B$ is not empty, and moreover, it contains $X$ such that $W\subset X$. Clearly $W\notin\mathcal F$, and so it forms a diamond with 3 other elements of $\mathcal F$. Since $\emptyset\notin \mathcal F$, $W$ must be the minimal element of the diamond. Let $X\in\mathcal P([n])$ be maximal with this property, i.e. being the minimal element of a diamond in which the other 3 elements are in $\mathcal F$, and containing $W$ as a subset. Since $X\not\subseteq A$ for all $A\in\mathcal A$ as $W\subseteq X$, we have by construction that $X\in\mathcal B$, which completes the proof.
\end{proof}
\section{Proof of the main result}
The first part of this section is dedicated to showing that, if $|\mathcal F|\leq n$, then $\mathcal A\cup G(\mathcal B)$ and $\mathcal X\cup H(\mathcal Y)$ are disjoint, which, together with the bounds obtained in the previous section, will be directly used to obtain a contradiction.
\begin{proposition}\label{A and X}
Suppose that $|\mathcal F|<\frac{3n}{2}$. Then $\mathcal A$ and $\mathcal X$ are disjoint.
\end{proposition}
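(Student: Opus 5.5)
The plan is to argue by contradiction: a set lying in both $\mathcal A$ and $\mathcal X$ would force many sets of $\mathcal F$ to sit just below or just above it, and a double-counting argument then gives $|\mathcal F|\geq \frac{3n}{2}+1$. So suppose $S\in\mathcal A\cap\mathcal X$. Then $S$ is both minimal and maximal in $\mathcal F$, so it is incomparable to every other member of $\mathcal F$. In particular nothing in $\mathcal F$ lies strictly below or strictly above $S$. Since $\emptyset,[n]\notin\mathcal F$, we have $S\neq\emptyset,[n]$.

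\emph{Elements $i\in S$.} The set $S\setminus\{i\}$ is not in $\mathcal F$, because it would be comparable to $S$. By saturation it forms a diamond with three sets of $\mathcal F$. It cannot be a middle or top element of that diamond, since the bottom of the diamond would then lie strictly below $S$. So $S\setminus\{i\}$ is the bottom, with middle sets $C_i,D_i$ and top set $E_i$, all in $\mathcal F$. Each of $C_i,D_i,E_i$ contains $S\setminus\{i\}$ and is not a superset of $S$, so none of them contains $i$. Hence each of these three distinct sets $T$ satisfies $T\cap S=S\setminus\{i\}$, and none of them equals $S$.

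\emph{Elements $i\notin S$.} Dually, $S\cup\{i\}\notin\mathcal F$, and it must be the top of a diamond whose other three elements $P_i,Q_i,R_i\in\mathcal F$ are subsets of $S\cup\{i\}$. None of them is a subset of $S$, so each contains $i$. Hence each of these three distinct sets $T$ satisfies $T\setminus S=\{i\}$.

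Now double count over $T\in\mathcal F\setminus\{S\}$. A given $T$ can play the first role for at most one $i$, since $i$ is determined by $T\cap S=S\setminus\{i\}$. It can play the second role for at most one $i$, since $i$ is determined by $T\setminus S=\{i\}$. So each $T$ is counted at most twice. The total number of incidences is at least $3|S|+3(n-|S|)=3n$. Therefore
\[
|\mathcal F\setminus\{S\}|\geq \tfrac{3n}{2},
\]
so $|\mathcal F|\geq \tfrac{3n}{2}+1$, contradicting the hypothesis $|\mathcal F|<\frac{3n}{2}$.

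The step I expect to be the main obstacle is the bound on overlaps. Using only the two middle sets of each diamond gives merely $|\mathcal F|\geq n+1$, because sets of the form $(S\setminus\{i\})\cup\{j\}$ can be counted in both families. The fix is to also use the top element $E_i$ (respectively the bottom element $R_i$) of each diamond. One checks that $E_i$ still misses $i$, since it does not contain $S$, and that $R_i$ still contains $i$, since it is not contained in $S$. This raises the count to three incidences per element of $[n]$, which is what yields $\frac{3n}{2}$.
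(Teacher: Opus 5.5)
Your proof is correct. It rests on the same key observation as the paper: the two middle sets and the top of the diamond built on $S\setminus\{i\}$ all satisfy $T\cap S=S\setminus\{i\}$. The difference is in how the counting is organised.

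The paper uses only the elements $i\in S$. If necessary, it first passes to the complemented family $\{[n]\setminus F:F\in\mathcal F\}$. That family is again diamond-saturated, has the same size, and $[n]\setminus S$ is again both minimal and maximal in it, so one may assume $|S|\geq n/2$. The $3|S|$ sets obtained are then automatically pairwise distinct, since $T\cap S=S\setminus\{i\}$ determines $i$. This gives $3|S|\geq\frac{3n}{2}$ sets directly.

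You keep $S$ as it is and use both kinds of diamond: those over $S\setminus\{i\}$ for $i\in S$, and the dual ones under $S\cup\{i\}$ for $i\notin S$. You pay for the possible overlap of the two families, which consists of sets of the form $(S\setminus\{i\})\cup\{j\}$, with the factor $2$ in the double count. Your version avoids the symmetry reduction and works in one pass; the paper's avoids any overlap analysis. The bounds are essentially the same: yours gives $|\mathcal F|\geq\frac{3n}{2}+1$, and the paper's count could likewise add $S$ itself to get $3|S|+1$.

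One detail should be made explicit. That none of $C_i,D_i,E_i$ equals $S$ does not follow from ``nothing in $\mathcal F$ lies strictly above $S$'' alone. If a middle set equalled $S$, the top would lie strictly above $S$, contradicting maximality. And $E_i=S$ is impossible because no set fits strictly between $S\setminus\{i\}$ and $S$. The dual check is needed to rule out $P_i,Q_i,R_i$ being equal to $S$. The paper verifies exactly these cases.
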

\begin{proof}
Suppose that there exists $X\in \mathcal A\cap \mathcal Y$. This means that $X$ is both a minimal and a maximal element of $\mathcal F$, which means that it is incomparable to all $F\in \mathcal F\setminus\{X\}$. Without loss of generality, by considering the diamond-saturated family $\mathcal F^c=\{[n]\setminus F : F \in \mathcal F\}$, we may assume that $|X|\geq \frac{n}{2}$.

By the minimality of $X$, $X\setminus\{x\}\not\in\mathcal F$ for all $x\in X$. Thus, there exist sets $P,Q,R\in \mathcal F$ such that $P$, $Q$, $R$ and $X\setminus\{x\}$ form a diamond. It is clear that $X\setminus\{x\}$ must be the minimal element of the diamond, as otherwise the minimal element will be strictly contained in $X$. Let $R$ be the maximal element of the diamond. We have that $X\neq P$ and $X\neq Q$, as otherwise we would have $X\subsetneq R$. Furthermore, $X\neq R$ as $X\setminus \{x\}\subsetneq P\subsetneq R$, hence $|R|\geq |X|+1$. This means that $P$, $Q$ and $R$ are all incomparable to $X$. Thus, since all contain $X\setminus \{x\}$, we get that $X\setminus P= \{x\}$, $X\setminus Q=  \{x\} $ and $X\setminus R  = \{x\}$. The sets analysed, and the Hasse diagram they form, are depicted below.
\begin{figure}[hbt!]
\hspace{0.1cm}\\
\centering
\begin{tikzpicture}
\node[label=above:$R$] (top) at (4,1) {$\bullet$}; 
\node[label=left:$P$] (left) at (3,0) {$\bullet$};
\node[label=right:$Q$] (right) at (5,0) {$\bullet$};
\node[label=above:\(X\)] (bokuno) at (1,0) {$\bullet$};
\node[label=below:$X\setminus\{x\}$] (bottom) at (4,-1) {$\bullet$};
\draw (top) -- (left) -- (bottom) -- (right) -- (top);
\draw (bottom) -- (bokuno);
\end{tikzpicture}
\end{figure}

Thus, for each $x\in X$, we have constructed three distinct elements $P_x,Q_x,R_x\in \mathcal F$ such that $X\setminus A = \{x\}$ for all $A\in \{P_x,Q_x,R_x\}$. This means that $|\mathcal F|\geq |\bigcup_{x\in X} \{P_x,Q_x,R_x\} | = 3|X|\geq\frac{3n}{2}$, a contradiction. Therefore, $\mathcal A\cap \mathcal X = \emptyset$, as claimed.
\end{proof}

\begin{proposition}\label{second disjoint lemma}
We have that $\mathcal A\cap H(\mathcal Y) = \emptyset$, and $\mathcal X\cap G(\mathcal B) = \emptyset$.
\end{proposition}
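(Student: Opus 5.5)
The plan is to argue directly from the definitions: each statement follows from the shape of the diamond used to define $H(\mathcal Y)$, respectively $G(\mathcal B)$. No counting and no use of the earlier lemmas is needed.

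\emph{First claim, $\mathcal A\cap H(\mathcal Y)=\emptyset$.} Suppose $P\in\mathcal A\cap H(\mathcal Y)$. Since $P\in H(\mathcal Y)$, there is some $Y\in\mathcal Y$ and sets $Q,R\in\mathcal F$ such that $Y,P,Q,R$ form a diamond with $Y$ as its maximal element and $P$ as one of the two middle elements. The remaining element, which we may call $R$, is the minimal element of this diamond. So $R\in\mathcal F$ and $R\subsetneq P$. This contradicts $P$ being a minimal element of $\mathcal F$. Hence $\mathcal A\cap H(\mathcal Y)=\emptyset$.

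\emph{Second claim, $\mathcal X\cap G(\mathcal B)=\emptyset$.} This is symmetric, or follows by passing to $\mathcal F^c=\{[n]\setminus F:F\in\mathcal F\}$. Suppose $P\in\mathcal X\cap G(\mathcal B)$. Then there is some $B\in\mathcal B$ and sets $R,Q\in\mathcal F$ such that $B,P,R,Q$ form a diamond with $B$ minimal and $P$ a middle element. The top element of this diamond is a member of $\mathcal F$ strictly containing $P$. This contradicts $P$ being a maximal element of $\mathcal F$.

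I do not expect a real obstacle here. The only point that needs care is that in the definitions of $H(Y)$ and $G(B)$, both of the other two vertices of the diamond (besides $Y$, respectively $B$) belong to $\mathcal F$. In particular, the vertex opposite $Y$ (respectively $B$) is in $\mathcal F$, and that vertex is comparable to $P$ in the required strict direction. None of the additional conditions in the definitions of $\mathcal Y$ and $\mathcal B$ are needed.
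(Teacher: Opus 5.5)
Your proposal is correct and is essentially the paper's proof. The bottom (respectively top) vertex of the diamond defining $H(\mathcal Y)$ (respectively $G(\mathcal B)$) lies in $\mathcal F$ and is strictly below (respectively above) $P$, contradicting minimality (respectively maximality). The paper handles the second claim by symmetry instead of writing it out.
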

\begin{proof}
Suppose that there exists $X\in\mathcal A\cap H(\mathcal Y)$. By the definition of $H(\mathcal Y)$, there exist $Y\in\mathcal Y$ and $Z,Z'\in\mathcal F$ such that $X,Y,Z,Z'$ form a diamond, as depicted below.
\begin{figure}[hbt!]
\hspace{0.1cm}\\
\centering
\begin{tikzpicture}
\node[label=above:$Y$] (top) at (4,1) {$\bullet$}; 
\node[label=left:$X$] (left) at (3,0) {$\bullet$};
\node[label=right:$Z$] (right) at (5,0) {$\bullet$};
\node[label=below:$Z'$] (bottom) at (4,-1) {$\bullet$};
\draw (top) -- (left) -- (bottom) -- (right) -- (top);
\end{tikzpicture}
\end{figure}
Therefore, $Z'\subsetneq X$, and since $Z'\in\mathcal F$, $X$ is not a minimal element of $\mathcal F$, so $X\notin\mathcal A$, a contradiction. Thus $\mathcal A\cap H(\mathcal Y)=\emptyset$, and by symmetry, $\mathcal X\cap G(\mathcal B)=\emptyset$.
\end{proof}
\begin{proposition}\label{G H disjoint} Suppose that $|\mathcal F|\leq n$. Then $G(\mathcal A)\cap H(\mathcal Y)=\emptyset$.
\end{proposition}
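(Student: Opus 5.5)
The plan is to read $G(\mathcal A)$ through the same recipe by which $G(\mathcal B)$ and $H(\mathcal Y)$ were built. That is, $G(\mathcal A)=\bigcup_{A\in\mathcal A}G(A)$, where $G(A)$ is the family of $P\in\mathcal F$ for which there are $R,Q\in\mathcal F$ such that $A,P,R,Q$ form a $\mathcal D_2$ with $A$ as the minimal element and $P$ as one of the two middle elements. I would then prove the statement in its strongest form: $G(\mathcal A)$ is empty, so its intersection with $H(\mathcal Y)$ is certainly empty. Under this reading the hypothesis $|\mathcal F|\leq n$ is not needed.

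The key step is a one-line observation. Take any $A\in\mathcal A$ and suppose $P\in G(A)$, witnessed by $R,Q\in\mathcal F$. Since $\mathcal A$ is the set of minimal elements of $\mathcal F$, we have $A\in\mathcal F$. Hence all four sets $A,P,R,Q$ lie in $\mathcal F$ and form an induced copy of $\mathcal D_2$ inside $\mathcal F$. This contradicts the assumption that $\mathcal F$ is diamond-saturated, and in particular diamond-free. Therefore $G(A)=\emptyset$ for every $A\in\mathcal A$, so $G(\mathcal A)=\emptyset$, and consequently $G(\mathcal A)\cap H(\mathcal Y)=\emptyset$.

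The same argument, applied to the complemented family $\{[n]\setminus F:F\in\mathcal F\}$, gives the symmetric fact that the analogous $H(\mathcal X)$ is empty. This mirrors the way Lemma~\ref{G(B) and A are disjoint} and Proposition~\ref{second disjoint lemma} were handled.

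I do not expect a real obstacle here. The only point that needs care is fixing the interpretation of $G(\cdot)$ on a family of sets that already lie in $\mathcal F$, and then invoking diamond-freeness of $\mathcal F$ directly. The size assumption $|\mathcal F|\leq n$ plays no role in this argument; it would only become relevant for the harder disjointness $G(\mathcal B)\cap H(\mathcal Y)=\emptyset$, which is needed to complete the disjointness of $\mathcal A\cup G(\mathcal B)$ and $\mathcal X\cup H(\mathcal Y)$ announced at the start of this section.
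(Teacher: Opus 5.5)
\textbf{There is a genuine gap: you prove a trivial statement that arises from a typo, and the statement the paper actually needs is left unproved.} Your argument is correct for the literal symbol $G(\mathcal A)$, but that symbol is a typo for $G(\mathcal B)$. The evidence is as follows:
\begin{itemize}
\item The paper only ever defines $G(B)$ for $B\in\mathcal B$.
\item Its proof of this proposition explicitly identifies the bottom $P\cap Y$ of the first diamond as the element of $\mathcal B$.
\item The sentence right after the proposition needs exactly $G(\mathcal B)\cap H(\mathcal Y)=\emptyset$ to conclude that $\mathcal A\cup G(\mathcal B)$ and $\mathcal X\cup H(\mathcal Y)$ are disjoint. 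The other three cross-intersections are handled by Propositions~\ref{A and X} and~\ref{second disjoint lemma}.
\end{itemize}
Your own remark that $|\mathcal F|\le n$ plays no role is the tell.

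Your proposal says nothing about $G(\mathcal B)\cap H(\mathcal Y)$, and your one-line trick does not transfer. A set $B\in\mathcal B$ is never in $\mathcal F$, since together with its three witnesses it would form a diamond in $\mathcal F$. So the diamond $B,P,Y,X$ witnessing $P\in G(B)$ does not lie in $\mathcal F$, and diamond-freeness gives no contradiction.

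What is missing is a counting argument that uses $|\mathcal F|\le n$. Suppose $P\in G(\mathcal B)\cap H(\mathcal Y)$. Take witnessing diamonds $B\subsetneq P,Y\subsetneq X$ and $Q\subsetneq P,R\subsetneq C$, with $X,Y,Q,R\in\mathcal F$, $B\in\mathcal B$ and $C\in\mathcal Y$. Maximality of $B$ and minimality of $C$ force $B=P\cap Y$ and $C=P\cup R$.
\begin{itemize}
\item Lemma~\ref{sizeB} applied to $P\cap Y$ gives $n-|P\cap Y|\ge n-|P|+1$ distinct sets $X_i\in\mathcal F$ with $X_i\setminus(P\cap Y)=\{i\}$. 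All of them have size at most $|P|$.
\item Lemma~\ref{sizeB} applied to $P\cup R$ gives $|P\cup R|\ge|P|+1$ distinct sets $Y_i\in\mathcal F$ with $(P\cup R)\setminus Y_i=\{i\}$. All of them have size at least $|P|$.
\item Since $|\mathcal F|\le n$, the two families share at least two members. Any shared member has size exactly $|P|$.
\item This forces $P\cap Y=P\setminus\{b\}$ and $P\cup R=P\cup\{a\}$ for some $b\in P$, $a\notin P$. The shared members are then exactly $P$ and $(P\setminus\{b\})\cup\{a\}$, so the union of the two families has exactly $n$ members.
\item The top $X\supsetneq P$ lies in neither family. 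It is too large for the first. In the second it could only be the set chosen for index $a$, and that set is $P\neq X$.
\end{itemize}
Hence $|\mathcal F|\ge n+1$, a contradiction. This is the argument your proposal needs to supply.
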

\begin{proof}
Suppose that there exists $P\in G(\mathcal A)\cap H(\mathcal Y)$. Then, by definition, there exist elements $R,Q,X,Y\in\mathcal F$ such that we have the following diagram.
\begin{figure}[hbt!]
\hspace{0.1cm}\\
\centering
\begin{tikzpicture}
\node[label=above:$X$] (top) at (4,1) {$\bullet$}; 
\node[label=right:$P$] (left) at (3,0) {$\bullet$};
\node[label=right:$Y$] (right) at (5,0) {$\bullet$};
\node[label=below:$P\cap Y$] (bottom) at (4,-1) {$\bullet$};
\node[label=above:$P\cup R$] (top1) at (2,1) {$\bullet$}; 
\node[label=left:$R$] (right1) at (1,0) {$\bullet$};
\node[label=below:$Q$] (bottom1) at (2,-1) {$\bullet$};
\draw (top) -- (left) -- (bottom) -- (right) -- (top);
\draw (top1) -- (right1) -- (bottom1) -- (left) -- (top1);
\end{tikzpicture}
\end{figure}
Notice that the elements in $\mathcal B$ and $\mathcal Y$ have to be $P\cap Y$ and $P\cup R$ respectively, by the maximality and minimality conditions of $\mathcal B$ and $\mathcal Y$ respectively.

Since $P\cap Y\in\mathcal B$, by Lemma~\ref{sizeB} we get that, for all $i\notin P\cap Y$, there exists $X_i\in\mathcal F$ such that $X_i\subseteq (P\cap Y)\cup\{i\}$ and $i\in X_i$. We choose exactly one such set for each $i\notin P\cap Y$, and let $S_1=\{X_i:i\notin P\cap Y\}$.

Similarly, since $P\cup R\in\mathcal Y$, by Lemma~\ref{sizeB} we get that, for all $i\in P\cup R$, there exists $Y_i\in\mathcal F$ such that $(P\cup R)\setminus\{i\}\subseteq Y_i$ and $i\notin Y_i$. We choose exactly one such set for each $i\in P\cup R$, and let $S_2=\{Y_i:i\notin P\cap Y\}$.

Then $n\geq|\mathcal F|\geq|S_1\cup S_2|=|S_1|+|S_2|-|S_1\cap S_2|=n-|P\cap Y|+|R\cup P|-|S_1\cap S_2|$. Coupled with the fact that $|P\cap Y|\leq |P|-1$ and $|P\cup R|\geq |P| + 1$, we get that $|S_1 \cap S_2|\geq 2$.

Let $Z\in S_1\cap S_2$. Since the sets in $S_1$ have size at most $|P\cap Y|+1$, and the sets in $S_2$ have size at least $|P\cup R|-1$, we get that $|Z|=|P|$, and that there exists $a\notin P$ and $b\in P$ such that $P\cup R=P\cup\{a\}$ and $P\cap Y=P\setminus\{b\}$.

Since $Z\in S_1$, let $i\notin P\setminus\{b\}$ be such that $Z\subseteq (P\setminus\{b\})\cup\{i\}$ and $i\in Z$. Also, since $Z\in S_2$, let $j\in P\cup\{a\}$ be such that $(P\cup\{a\})\setminus\{j\}\subseteq Z$ and $j\notin Z$. We therefore have $$(P\cup\{a\})\setminus\{j\}\subseteq Z\subseteq (P\setminus\{b\})\cup\{i\}.$$
Therefore, either $i=a$, $j=b$, and $Z=(P\setminus\{b\})\cup\{a\}$, or $i=b$, $j=a$ and $Z=P$. Putting everything together, $S_1\cap S_2=\{P, (P\setminus\{b\})\cup\{a\}\}$, and $|S_1\cup S_2|=n$.

However, $X\in\mathcal F$, and since $P\subsetneq X$, $|P|<|X|$, thus $X\notin S_1$. Furthermore, if $X\in S_2$ and it corresponds to $l\in P\cup\{a\}$, then $(P\cup\{a\})\setminus\{l\}\subseteq X$ and $l\notin X$. But since $P\subset X$, $l\notin P$, and so $l=a$. This is a contradiction since $P\in S_2$, corresponds to $a$, and $P\neq X$. 

This tells us that $X\notin S_1\cup S_2$, and so $|\mathcal F|\geq|S_1\cup S_2\cup\{X\}|=n+1$, a contradiction. Thus, $G(\mathcal A)\cap H(\mathcal Y)=\emptyset$.
\end{proof}

By \Cref{A and X,second disjoint lemma,G H disjoint}, we indeed have that, if $|\mathcal F|\leq n$, then $\mathcal A\cup G(\mathcal B)$ and $\mathcal X\cup H(\mathcal Y)$ are disjoint. We are now ready to prove our main result. The argument will be split into two cases, depending on whether $W\cup\overline{W}=\emptyset$ or not.
\begin{proof}[Proof of Theorem~\ref{maintheorem}] Since a full chain has size $n+1$ and it is diamond-saturated, we have that $\text{sat}^{*}(n,\mathcal D_2)\leq n+1$. Now let $\mathcal F$ be a diamond-saturated family. As discussed previously, if one of $\emptyset$ or $[n]$ are in $\mathcal F$, we have that $|\mathcal F|\geq n+1$. Thus we may assume that $\emptyset,[n]\notin\mathcal F$, and so all our work, notations and setup from previous sections applies. Suppose that $|\mathcal F|\leq n$. By the above $\mathcal A\cup G(\mathcal B)$ and $\mathcal X\cup H(\mathcal Y)$ are disjoint. We have two cases.
\begin{case2} $W=\overline{W}=\emptyset$.
\end{case2}
By Corollary~\ref{Corollary bounding size of A cup G(B) when W empty}, we have that $$|\mathcal F|\geq |\mathcal A\cup G(\mathcal B)|+|\mathcal X\cup H(\mathcal Y)|\geq n+1-m_{\mathcal A}+n+1-\max_{X\in\mathcal X}(n-|X|).$$
In other words, $|\mathcal F|\geq n+2+\min_{X\in\mathcal X}|X|-\max_{A\in\mathcal A}|A|$. However, $\min_{X\in\mathcal X}|X|\geq\max_{A\in\mathcal A}|A|$. Indeed, suppose that that is not the case. Let $A\in\mathcal A$ be of maximal cardinality, and let $X\in\mathcal X$ be of minimal cardinality. By our assumption we have that $|A|\geq |X|+1$. By Lemma~\ref{sizeA} we get that there exist at least $|A|$ elements $F\in\mathcal F$ such that $|F|\geq|A|$, and that there exist at least $n-|X|$ elements $G\in\mathcal F$ such that $|G|\leq |X|$. Since, by size, these two collections of sets do not intersect, we get that there are at least $|A|+n-|X|\geq n+1$ elements in $\mathcal F$, a contradiction.

Thus $\min_{X\in\mathcal X}|X|\geq\max_{A\in\mathcal A}|A|$, and consequently $|\mathcal F|\geq n+2$, a contradiction.
\begin{case2} $W\neq\emptyset$, or $\overline{W}\neq\emptyset$.
\end{case2}
Suppose without loss of generality that $W\neq \emptyset$, and let $\mathcal F_1=\{B\in G(\mathcal B):W\subseteq B\}$. By Corollary~\ref{corollary n+1-W} we have that $|\mathcal F_1|+|\mathcal A|=|\mathcal F_1\cup\mathcal A|\geq n+1-|W|$. By Lemma~\ref{AandBincomparable}, there exists $A\in\mathcal A$ and $X\in\mathcal X$ such that $A\not\subseteq B$.

Now, by Lemma~\ref{nicelemmame}, for every $i\in W$, there exists $X_i, X_i\cup\{i\}\in\mathcal F$ such that $A\subseteq X_i$. Let $\mathcal F_2=\{X_i, X_i\cup\{i\}:i\in W\}\subseteq\mathcal F$. Consider the graph with vertex set the elements of $\mathcal F_2$, and an edge between each $X_i$ and $X_i\cup\{i\}$. It is easy to check that this graph is acyclic, hence a forest. Let $t$ be the number of connected components. Then the number of edges is at most the number of vertices minus $t$. In other words, $|W|\leq |\mathcal F_2|-t$.

If within a connected component there exists a vertex that contains $W$ as a subset, then it is the only one with this property as we cannot reach another set that still contains all of $W$ by repeatedly adding or removing one element of $W$ at a time. Therefore, every connected component meets $\mathcal F_1$ at most once, which tells us that $|\mathcal F_1\cup\mathcal F_2|\geq|\mathcal F_1|+|W|$. Moreover, since every element of $\mathcal F_2$ contains $A\in\mathcal A$, $|\mathcal F_2\cap\mathcal A|\leq 1$. Together with the above and the fact that $\mathcal A\cap\mathcal F_1=\emptyset$, we have that $|\mathcal F|\geq|\mathcal F_1\cup\mathcal F_2\cup\mathcal A|\geq |\mathcal F_1|+|W|+|\mathcal A|-1\geq n$.

Finally, looking at $X\in\mathcal X\subseteq\mathcal F$, we clearly have by Proposition~\ref{second disjoint lemma} that $X\notin\mathcal F_1\subseteq G(\mathcal B)$, and by Proposition~\ref{A and X} $X\notin\mathcal A$. Furthermore, since every element of $\mathcal F_2$ contains $A$, and $A$ and $X$ are incomparable, $X\notin\mathcal F_2$. Thus, $X\notin\mathcal F_1\cup\mathcal F_2\cup\mathcal A$, which yields at least $n+1$ elements in $\mathcal F$, a contradiction.

Therefore, in all cases, $|\mathcal F|\geq n+1$, which gives that $\text{sat}^*(n,\mathcal D_2)=n+1$, as claimed.
\end{proof}
\section{Concluding remarks}
The above proof shows that the size of a diamond-saturated family cannot go below $n+1$, but it does not tell us much about the structure of such set-systems. In particular, aside from a maximal chain, the empty set with all the singletons, or the full set with the complements of singletons, there are no other known diamond-saturated families of size exactly $n+1$. Is it possible that these are the only ones? Does there exist a diamond-saturated family of size $n+1$ that does not contain $\emptyset$ or $[n]$? We conjecture the following.
\begin{conjecture}
Let $\mathcal F$ be a diamond-saturated family. If $|\mathcal F|=n+1$, then $\mathcal F$ is either a maximal chain, the empty set together with all singletons, or the full set together with all complements of singletons. Moreover, if $\emptyset,[n]\notin\mathcal F$, then $|\mathcal F|\geq 2n-c$, for some universal constant $c$.
\end{conjecture}

Since the diamond is the 2-dimensional hypercube, it is natural to wonder what is the saturation number for a general hypercube $Q_k$, viewed as a poset. For example, for $Q_3$ we know that $\text{sat}^*(n,Q_3)\leq 3n-2$, achieved by $\mathcal F=\{\{i\},\{1,j\}, \{2,j\}:i\in[n],3\leq j\leq n\}\cup\{\emptyset,[n]\}$. Is this tight? What about a general $Q_k?$ We propose the following.
\begin{conjecture}
Let $k\geq2$. Then $\text{sat}^*(n, Q_k)=(2^{k-1}-1)n-c$, for some absolute constant $c$.
\end{conjecture}
\bibliographystyle{amsplain}
\bibliography{references.bib}
\Addresses
\end{document}